\patchcmd{\thebibliography}{\section*{\refname}}{}{}{}
\DeclareMathOperator{\N}{\mathbb{N}}
\DeclareMathOperator{\F}{\mathbb{F}}
\DeclareMathOperator{\Prob}{\mathbb{P}}
\DeclareMathOperator{\M}{\mathcal{M}}
\DeclareMathOperator{\limn}{\lim_{n \to \infty}}
\title{\textbf{Inclusion of Forbidden Minors in Random Representable Matroids}}
\ead[url]{www.princeton.edu}
\author[Computer Science Department]{Jason Altschuler \corref{mycorrespondingauthor} \fnref{fn1}}
\ead{jasonma@princeton.edu}\fntext[fn1]{Present address: Laboratory for Information and Decision Systems (LIDS), Massachusetts Institute of Technology, Cambridge, MA, 02139, USA.}
\author[Math Department]{Elizabeth Yang\fnref{fn2}}
\ead{eyang@princeton.edu}
\date{}
\address[Computer Science Department]{Princeton University Computer Science Department, jasonma@princeton.edu}
\address[Math Department]{Princeton University Math Department, eyang@princeton.edu}
\theoremstyle{plain}
\theoremstyle{plain}
\newtheorem{thm}{Theorem}
\theoremstyle{plain}
\theoremstyle{plain}
\theoremstyle{plain}
\theoremstyle{plain}
\theoremstyle{plain}
\newtheorem{lemma}{Lemma}
\theoremstyle{plain}
\newtheorem{corol}{Corollary}
\theoremstyle{plain}
\theoremstyle{remark}
\theoremstyle{discussion}
\theoremstyle{plain}
\begin{document}



\begin{frontmatter}


\begin{abstract}
\par In 1984, Kelly and Oxley introduced the model of a random representable matroid $M[A_n]$ corresponding to a random matrix $A_n \in \F_q^{m(n) \times n}$, whose entries are drawn independently and uniformly from $\F_q$. Whereas properties such as rank, connectivity, and circuit size have been well-studied, forbidden minors have not yet been analyzed. Here, we investigate the asymptotic probability as $n \to \infty$ that a fixed $\F_q$-representable matroid $M$ is a minor of $M[A_n]$. (We always assume $m(n) \geq \text{rank}(M)$ for all sufficiently large $n$, otherwise $M$ can never be a minor of the corresponding $M[A_n]$.) When $M$ is free, we show that $M$ is asymptotically almost surely (a.a.s.) a minor of $M[A_n]$. When $M$ is not free, we show a phase transition: $M$ is a.a.s. a minor if $n - m(n) \to \infty$, but is a.a.s. not if $m(n) - n \to \infty$. In the more general settings of $m \leq n$ and $m > n$, we give lower and upper bounds, respectively, on both the asymptotic and non-asymptotic probability that $M$ is a minor of $M[A_n]$. The tools we develop to analyze matroid operations and minors of random matroids may be of independent interest.
\par Our results directly imply that $M[A_n]$ is a.a.s. not contained in any proper, minor-closed class $\mathcal{M}$ of $\F_q$-representable matroids, provided: (i) $n - m(n) \to \infty$, and (ii) $m(n)$ is at least the minimum rank of any $\F_q$-representable forbidden minor of $\mathcal{M}$, for all sufficiently large $n$. As an application, this shows that graphic matroids are a vanishing subset of linear matroids, in a sense made precise in the paper. Our results provide an approach for applying the rich theory around matroid minors to the less-studied field of random matroids.
\end{abstract}

\end{frontmatter}


\section{Introduction}

\par The motivation of this paper is to connect the study of random matroids with the rich theory recently developed around matroid minors. We ask a natural question: when does a fixed minor occur in the column dependence matroid obtained from a random matrix?
\par Formally, we consider Kelly and Oxley's model of a random representable matroid $M[A_n]$ corresponding to a random matrix $A_n \in \F_q^{m(n) \times n}$, whose entries are drawn independently and uniformly from the Galois field $\F_q$ of order $q$ \citep{KO84}. We denote this uniform distribution over random matrices in $\F_q^{m(n) \times n}$ succinctly by $[U_q]^{m(n) \times n}$, and write $A_n \sim [U_q]^{m(n) \times n}$ to indicate that $A_n$ is drawn according to it. This paper investigates the asymptotic probability as $n \to \infty$ that a fixed $\F_q$-representable matroid $M$ is a minor of $M[A_n]$.
\par Interestingly, we are able to characterize the asymptotic probability that $M$ is a minor of $M[A_n]$ solely by how fast the number of rows $m(n)$ of $A_n$ grows. Observe that $M[A_n]$ can \textit{never} have $M$ as a minor if $m(n)$ is less than the rank $r(M)$ of $M$. Thus, throughout the paper, we impose the constraint that $m(n) \geq r(M)$ for all sufficiently large $n$. 
\par We first show that every fixed \textit{free} matroid $M$ is asymptotically almost surely (a.a.s.) a minor of $M[A_n]$. We also give a closed-form expression for the non-asymptotic probability that this occurs, in terms of Gaussian coefficients.
\par However, inclusion of \textit{non-free} minors is not as simple. Formally, for any finite field $\F_q$ and any non-free, $\F_q$-representable matroid $M$, we show that the following phase transition occurs:
$$
\limn \Prob_{A_n \sim [U_q]^{m(n) \times n} } \left\{M \text{ is a minor of } M[A_n] \right\} = 
\begin{cases}
	   1 & \text{ if } n - m(n) \to \infty \\
	   0 & \text{ if } m(n) - n \to \infty
\end{cases}
$$
Along the way, we show that $M[A_n]$ is a.a.s. the free matroid on $n$ elements when $m(n) - n \to \infty$, extending a result of~\citep{KO84}.
\par We also analyze the threshold between $n - m(n) \to \infty$ and $m(n) - n \to \infty$. As will be discussed formally later -- but can be seen intuitively from the above phase transition -- whether $m(n)$ is smaller or larger than $n$ results in very different behaviors. So we investigate two cases: either $m(n) \geq n$ for all sufficiently large $n$, or $m(n) < n$ for all sufficiently large $n$.
\par In the case that $m(n) \geq n$ for all sufficiently large $n$, we show that for any non-free, $\F_q$-representable matroid $M$:
$$
\limsup_{n \to \infty} \Prob_{A_n \sim [U_q]^{m(n) \times n} } \left\{M \text{ is a minor of } M[A_n] \right\} \leq 1 - C_q
$$
where $C_q = \prod_{k=1}^{\infty} \left(1 - q^{-k}\right)$, and the limit superior is used only because the limit might not exist. (In the main text, we give intuition for the constant $C_q > 0$ by equating it to the asymptotic probability that a square matrix in $\F_q^{n \times n}$ is invertible~\citep{Cooper}.) In order to prove this asymptotic bound, we show the following non-asymptotic bound that holds for any $m(n) \geq n$:
$$
\Prob_{A \sim [U_q]^{m \times n} } \left\{M \text{ is a minor of } M[A] \right\} \leq 1 - \prod_{i=0}^{n-1} (1 - q^{i -m(n)})
$$
\par Next, in the case that $m(n) < n$ for all sufficiently large $n$, we show that provided\footnote{There is an analogue that only requires $m(n) \geq r(M)$ for sufficiently large $n$, but the resulting bound is messier.} $m(n) \geq |E|$ for all sufficiently large $n$, then for any non-free, $\F_q$-representable matroid $M = (E, I)$ with $\ell$ loops:
$$
\liminf_{n \to \infty} \Prob_{A_n \sim [U_q]^{m(n) \times n} } \left\{M \text{ is a minor of } M[A_n] \right\}
>
\left(1 - q^{-|E|}\right) p_{|E|-1, q, M}
$$
where $p_{s, q, M} \in (0, 1)$ is defined as:
$$p_{s,q,M}
=
\binom{|E|}{\ell} \left(\frac{\left(q-1\right)^{|E| - r(M) - \ell}}
{q^{s(|E| - r(M))}}\right)
\prod_{i=0}^{r(M)-1}\left(1 - q^{i-s}\right)
$$
Again, the limit inferior is used only because the limit might not exist. In order to prove this asymptotic bound, we show the following non-asymptotic bound that holds for any $m \geq r(M)$ and $n \geq |E|$:
$$
\Prob_{A \sim [U_q]^{m \times n}} \left\{M \text{ is a minor of } M[A] \right\}
>
\max_{k \in \mathbb{Z}_+, \; k \leq \min(n - |E|, m- r(M))}
\left( 1 - q^{-(n-k)}\right)
\left(1 - \left(1 - p_{m-k,q,M} \right)^{\lfloor \frac{n-k}{|E|} \rfloor}\right)
$$
\par We note that this second setting $m(n) < n$ is significantly more involved because then $A_n \in \F_q^{m(n) \times n}$ is guaranteed to have dependence relations between the columns. Intuitively, this means that we will likely require contractions (in addition to just deletions) to obtain $M$ as a minor of $M[A_n]$. But it is not even immediately clear how we should take contractions on a random matrix. The machinery we develop in order to achieve this may be of independent interest (see Section~\ref{subsec:n-bigger-outlines} for an overview of these tools).
\par Our final result allows us to leverage the connection between matroid characterizations and forbidden minors. Specifically, we show how our above results imply that, as $n \to \infty$, the random matroid $M[A_n]$ is a.a.s. not in \textit{any} fixed proper, minor-closed class $\mathcal{M}$ of $\F_q$-representable matroids, provided: (i) $n - m(n) \to \infty$, and (ii) $m(n)$ is at least the minimum rank of the $\F_q$-representable forbidden minors of $\mathcal{M}$, for all sufficiently large $n$. As an example application, this directly shows that graphic matroids are a vanishing subset of linear matroids, with respect to the uniform random distribution $[U_q]^{m(n) \times n}$, and under mild constraints on the number of rows $m(n)$.
\par Along the way, we establish various results about random representable matroids and the uniform distribution $[U_q]^{m \times n}$. We note that our techniques rely heavily on properties of the uniform distribution $[U_q]^{m \times n}$, so generalizing to other distributions over $\F_q^{m \times n}$ would likely require new machinery.

\subsection{Related work}
\par Random matrices and especially random graphs have become increasingly well understood \citep{Bollobas, JLA11, Verdu, BlakeStudholme, Tao12}, but the field of random matroid theory is still much less explored. The works of \citep{KO82a, KO82b, Oxley84, K88, KW96, KL91, KL99} analyze $\F_q$-representable random submatroids of projective geometries, while \citep{Knuth, MNWW11, LOSW13} study distributions over all matroids with fixed ground set size. Here, we consider the random representable matroid model of \citep{KO84}, which considers distributions over all $\F_q$-representable matroids with fixed ground set size. But whereas \citep{KO84} investigated properties such as rank, connectivity, and circuit size of these random representable matroids, we focus in this paper on matroid minors and inclusion in proper, minor-closed classes of $\F_q$-representable matroids. To the authors' knowledge, these are the first results to analyze matroid-minor properties of random matroids.

\subsection{Preliminaries and notation}
We refer the reader to the excellent texts \citep{Oxley} and \citep{Grimmet-Stirzaker} for background on matroid theory and probability theory, respectively. Our notation is mostly standard and adheres to theirs, but for completeness we list a few notations we use commonly throughout the paper. We note that we write ``\textit{independent}'', ``\textit{linearly independent}'', and ``\textit{stochastically independent}'' in order to disambiguate the different notions of independence in matroid theory, linear algebra, and probability theory, respectively. 
\par A matrix $A$ is a \textit{representation} of a matroid $M$ if the linear independence relations between the columns of $A$ are identical to the independence relations between the corresponding elements of $M$. A matroid $M$ is \textit{representable} over the $q$-element Galois field $\F_q$, or $\F_q$\textit{-representable} for short, if $M$ has some matrix representation $A$ over $\F_q$. We follow the notation of \citep{Oxley} to denote the matroid corresponding to a matrix $A$ by $M[A]$. We denote by $r(M)$ the \textit{rank} of the matroid $M$. A \textit{free matroid} is a matroid with all sets independent. A \textit{loop} of a matroid is an element that does not belong to any basis; for linear matroids, this is equivalent to a column being the zero vector.
\par We say that an indexed family $\{E_n\}_{n \in \N}$ of events occurs \textit{asymptotically almost surely}, or \textit{a.a.s.} for shorthand, if $\limn \Prob\{E_n\} = 1$. We write $X \sim D$ to denote that a random variable $X$ is distributed according to a distribution $D$. 

\subsection{Outline of the article}
Sections~\ref{sec:free},~\ref{sec:m-bigger}, and~\ref{sec:n-bigger} analyze the asymptotic and non-asymptotic probabilities that a fixed, $\F_q$-representable matroid $M$ is a minor of the random representable matroid $M[A_n]$ corresponding to $A_n \sim [U_q]^{m(n) \times n}$, i.e.:
\begin{align}
\Prob_{A_n \sim [U_q]^{m(n) \times n}}\{M \text{ is a minor of } M[A_n] \}\label{eq:outline-prob}
\end{align}
\par Section~\ref{sec:free} analyzes the probability in~\eqref{eq:outline-prob} when $M$ is \textit{free}. In Section~\ref{subsec:free-nonasymp}, we give a closed-form expression for the non-asymptotic setting. Section~\ref{subsec:free-asymp} analyzes the asymptotic setting as $n \to \infty$, and shows that under a mild constraint on the number of rows $m(n)$ of $A_n$, every fixed free matroid $M$ is a.a.s. a minor of $M[A_n]$.
\par It is more difficult to analyze the probability in~\eqref{eq:outline-prob} when $M$ is \textit{not free}. The settings $m(n) \geq n$ and $m(n) < n$ exhibit different phenomena and require separate tools to analyze, so we split these cases into Sections~\ref{sec:m-bigger} and~\ref{sec:n-bigger}, respectively.
\par In Section~\ref{subsec:m-bigger-infty}, we show that the probability in~\eqref{eq:outline-prob} tends a.a.s. to $0$ when $m(n) - n \to \infty$, i.e. every fixed non-free matroid $M$ is a.a.s. not a minor of $M[A_n]$. In Section~\ref{subsec:m-bigger-gen}, we analyze the general case when $m(n) \geq n$, and give asymptotic and non-asymptotic upper bounds on this probability.
\par In Section~\ref{subsec:n-bigger-nonasymp}, we present a non-asymptotic lower bound on~\eqref{eq:outline-prob} for the setting $m(n) < n$. Before proving this result, we first present two asymptotic consequences of it in Section~\ref{subsec:n-bigger-infty}: first, we give an asymptotic lower bound on~\eqref{eq:outline-prob}; and second, we show that if additionally $n - m(n) \to \infty$, then~\eqref{eq:outline-prob} tends a.a.s. to $1$, i.e. every fixed non-free matroid $M$ is a.a.s. a minor of $M[A_n]$. We then return to prove the result stated in Section~\ref{subsec:n-bigger-nonasymp}: Section~\ref{subsec:n-bigger-outlines} outlines the proof and overviews the tools we need to develop for it. We develop these tools in the following Sections~\ref{subsec:n-bigger-reducing} and~\ref{subsec:n-bigger-minor}. Informally stated, in Section~\ref{subsec:n-bigger-reducing}, we show how to apply matroid operations to ``reduce'' the size of a random representable matroid while preserving its randomness; and in Section~\ref{subsec:n-bigger-minor}, we show how to bound below the probability that a representation of a matroid is a submatrix of a random representable matroid. Finally, Section~\ref{subsec:n-bigger-proofs} combines these tools to prove the result stated in Section~\ref{subsec:n-bigger-nonasymp}.
\par Section~\ref{sec:implications} contains implications of these results regarding forbidden minors and matroid characterizations.

\section{Probability of Containing a Free Minor}\label{sec:free}
\subsection{Closed-form expression for the non-asymptotic setting}\label{subsec:free-nonasymp}
The main result of this section is a closed-form expression for the probability that a fixed free matroid $M$ is a minor of the random representable matroid $M[A]$ corresponding to $A \sim [U_q]^{m \times n}$. Clearly we may restrict to the setting that $m, n \geq r(M)$, otherwise $M$ can never be a minor of $M[A]$.
\par Our closed-form expression will be in terms of the celebrated Gaussian coefficients, which are defined as follows for $k \leq n$:
$$
{n \brack k}_q = \prod_{i=0}^{k-1} \frac{q^{n-i} - 1}{q^{k-i} - 1}
$$
for any $q$ that is a prime power~\citep{Lint}.
\begin{thm}\label{thm:free-nonasymp}
Let $\F_q$ be any finite field, and $M$ be any free matroid. If $m,n \geq r(M)$, then:
$$
\Prob_{A \sim [U_q]^{m \times n}}\{M \text{ is a minor of } M[A] \} =
q^{-mn}
\sum_{k=r(M)}^{\min(m, n)}
\left(
{\min(m, n) \brack k}_q \sum_{i=0}^k (-1)^{k - i} {k \brack i}_q q^{\max(m, n)i + \binom{k - i}{2}}
\right)
$$
\end{thm}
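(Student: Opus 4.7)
My first step is the following structural reduction: when $M$ is free of rank $r := r(M)$, the matroid $M$ is a minor of $M[A]$ if and only if $\rank(A) \geq r$. Forward: minor-taking cannot increase rank. Backward: if $\rank(A) \geq r$, then picking any $r$ linearly independent columns of $A$ and deleting the remaining $n - r$ columns gives a restriction (hence a minor) whose column matroid is precisely the free matroid of rank $r$. Consequently,
\[
\Prob_{A \sim [U_q]^{m \times n}}\{M \text{ is a minor of } M[A]\} \;=\; q^{-mn}\sum_{k=r}^{\min(m,n)} N(m,n,k;q),
\]
where $N(m,n,k;q)$ denotes the number of $m \times n$ matrices over $\F_q$ of rank exactly $k$.

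For the second step, I would invoke the classical count
\[
N(m,n,k;q) \;=\; {\min(m,n) \brack k}_q \prod_{j=0}^{k-1}\bigl(q^{\max(m,n)} - q^j\bigr),
\]
obtained by first choosing a $k$-dimensional column (or row) subspace---which supplies the Gaussian binomial factor---and then counting the $\prod_{j=0}^{k-1}(q^{\max(m,n)} - q^j)$ surjective linear maps from $\F_q^{\max(m,n)}$ onto that fixed $k$-dimensional subspace. This already reproduces the leading ${\min(m,n) \brack k}_q$ factor that appears inside the outer sum in the theorem.

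The final step is purely algebraic. Setting $x := q^{\max(m,n)}$, I would expand
\[
\prod_{j=0}^{k-1}(x - q^j) \;=\; \sum_{i=0}^{k} (-1)^{k-i}\, e_{k-i}(1,q,\ldots,q^{k-1})\, x^i,
\]
where $e_j$ denotes the $j$-th elementary symmetric polynomial, and then apply the classical $q$-identity $e_j(1, q, \ldots, q^{k-1}) = q^{\binom{j}{2}} {k \brack j}_q$ (a consequence of the $q$-binomial theorem, or provable by induction on $k$ using the Pascal-type recursion for Gaussian coefficients). Combined with the symmetry ${k \brack k-j}_q = {k \brack j}_q$, the inner sum becomes $\sum_{i=0}^k (-1)^{k-i} {k \brack i}_q q^{\max(m,n)\, i + \binom{k-i}{2}}$, matching the theorem statement exactly. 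I expect the minor-to-rank reduction and the rank-count formula to be essentially immediate; the main (and only non-routine) obstacle will be the $q$-expansion in this last step, whose key insight is to recognize the inner sum in the theorem as the expansion of $\prod_{j=0}^{k-1}(x - q^j)$ via elementary symmetric polynomials.
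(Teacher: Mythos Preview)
Your proposal is correct and follows the same high-level structure as the paper: reduce the event ``$M$ is a minor of $M[A]$'' to ``$\rank(A) \geq r(M)$'', then sum the number of rank-$k$ matrices over $k \geq r(M)$ and normalize by $q^{-mn}$. The paper carries out exactly this reduction and then invokes the rank-count formula as a black box (Lemma~\ref{lem:gaussian-coeff}, cited from van Lint and attributed to M\"obius inversion on the subspace lattice).

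The only genuine difference is in how you justify the rank-count formula itself. Rather than citing the alternating-sum form directly, you first write down the product form $N(m,n,k;q) = {\min(m,n) \brack k}_q \prod_{j=0}^{k-1}\bigl(q^{\max(m,n)} - q^j\bigr)$ via the ``choose the column/row space, then count surjections onto it'' argument, and then expand the product using the identity $e_j(1,q,\dots,q^{k-1}) = q^{\binom{j}{2}}{k \brack j}_q$ together with the symmetry of Gaussian binomials. This is a more elementary and self-contained route than the M\"obius-inversion proof the paper alludes to, and it has the added benefit of explaining \emph{why} the inner sum in the theorem statement has its particular shape (it is just the $q$-binomial expansion of a falling product). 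The paper's approach, by contrast, treats that inner sum as a pre-packaged formula. Both are valid; yours is arguably cleaner for a reader who does not already have the van Lint reference at hand.
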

The proof follows immediately from the following well-known result, which counts the number of $m \times n$ matrices over a finite field $\F_q$ with rank $k$~\citep{Lint}.
\begin{lemma}\label{lem:gaussian-coeff}
If $m, n \geq k$, then the number of $m \times n$ matrices over $\F_q$ that have rank $k$ is:
$$
{\min(m, n) \brack k}_q \sum_{i=0}^k (-1)^{k - i} {k \brack i}_q q^{\max(m, n)i + \binom{k - i}{2}}
$$
\end{lemma}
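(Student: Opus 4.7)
The plan is to stratify the rank-$k$ matrices in $\F_q^{m \times n}$ by their column space, reducing the count to that of $k \times n$ matrices of rank exactly $k$, which I would then handle by M\"obius inversion on the subspace lattice of $\F_q^k$. Since rank is invariant under transposition, I would first note that it suffices to treat the case $m \leq n$, so that $\min(m,n) = m$ and $\max(m,n) = n$; the case $m > n$ then follows by applying the same argument to $A^\top$. The resulting formula is not visually symmetric in $m$ and $n$, but must be equal in value to its $m \leftrightarrow n$ swap, which is a useful sanity check.

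The outer factor ${\min(m,n) \brack k}_q$ arises as follows. Every rank-$k$ matrix $A \in \F_q^{m \times n}$ has a $k$-dimensional column space $V \leq \F_q^m$; since $k \leq m = \min(m,n)$, the number of such $V$ is exactly ${m \brack k}_q$. So the remaining task is to show that, for any fixed $k$-dimensional $V \leq \F_q^m$, the number of matrices in $\F_q^{m \times n}$ with column space \emph{equal to} $V$ is $\sum_{i=0}^{k} (-1)^{k-i} {k \brack i}_q q^{ni + \binom{k-i}{2}}$. Fixing a basis of $V$ gives a bijection between such matrices and the $k \times n$ matrices over $\F_q$ whose columns span $\F_q^k$, so the problem reduces to counting the latter.

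For that, I would define $g(W) = q^{n \dim W}$ (the number of $k \times n$ matrices with all columns in $W \leq \F_q^k$) and $f(W)$ (the number whose column space equals $W$), so that $g(V) = \sum_{W \leq V} f(W)$ by partitioning according to the exact column space. M\"obius inversion on the subspace lattice of $\F_q^k$ then yields $f(\F_q^k) = \sum_{W \leq \F_q^k} \mu(W, \F_q^k)\, g(W)$. Grouping subspaces $W$ by their dimension $i$ produces the factor ${k \brack i}_q$, and invoking the classical formula $\mu(W, \F_q^k) = (-1)^{k - \dim W} q^{\binom{k - \dim W}{2}}$ for the M\"obius function of the subspace lattice produces the alternating sum in the statement. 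Substituting $n = \max(m,n)$ and multiplying by ${\min(m,n) \brack k}_q$ then completes the proof.

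The only step with substantive content is the M\"obius function formula for the subspace lattice, which I expect to be the main obstacle for a reader unfamiliar with $q$-analog combinatorics; this is a well-known identity, but a self-contained argument can be obtained by running inclusion--exclusion directly over the hyperplanes of $\F_q^k$ that contain every column, where the $q^{\binom{k-i}{2}}$ factor emerges from the standard $q$-analog identity for counting flags of subspaces collapsing to a given intersection. Either path reduces the rest of the argument to routine bookkeeping.
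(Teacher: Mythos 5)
Your proposal is correct and follows the same route the paper points to: it explicitly defers to the standard M\"obius-inversion argument on the subspace lattice in the cited reference rather than proving the lemma itself, and your stratification by column space followed by inversion with $\mu(W,\F_q^k) = (-1)^{k-\dim W}q^{\binom{k-\dim W}{2}}$ is exactly that standard argument, carried out correctly (including the reduction to $m \leq n$ via transposition).
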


A standard proof of Lemma~\ref{lem:gaussian-coeff} uses M\"{o}bius inversions on the lattice of subspaces of the vector space $\F_q^n$, and can be found in~\citep{Lint}. We now show how Lemma~\ref{lem:gaussian-coeff} directly implies Theorem~\ref{thm:free-nonasymp}.
\begin{proof}[Proof of Theorem~\ref{thm:free-nonasymp}]
$M$ is a minor of $M[A]$ if and only if $A$ contains $r(M)$ linearly independent columns, which occurs if and only if $A$ has rank at least $r(M)$. By Lemma~\ref{lem:gaussian-coeff}, there are
$$
\sum_{k=r(M)}^{\min(m,n)}
\left(
{\min(m,n) \brack k}_q \sum_{i=0}^k (-1)^{k - i} {k \brack i}_q q^{\max(m,n)i + \binom{k - i}{2}}
\right)
$$
such $m \times n$ matrices over $\mathbb{F}_q$. Since $A \sim [U_q]^{m \times n}$ is drawn from the uniform distribution, the probability that $A$ is equal to a fixed such matrix is $q^{-mn}$.
\end{proof}

\subsection{$M$ is a minor of $M[A_n]$ a.a.s. when $n \to \infty$}\label{subsec:free-asymp}
The main result of this section shows that every fixed free matroid $M$ is a.a.s. a minor of the random representable matroid $M[A_n]$ corresponding to $A_n \sim [U_q]^{m(n) \times n}$, provided only $m(n) \geq r(M)$ for all sufficiently large $n$. This is formally stated as follows.
\begin{thm}\label{thm:free}
Let $\F_q$ be any finite field, and $M$ be any free matroid. If $m: \mathbb{N} \rightarrow \mathbb{N}$ satisfies $m(n) \geq r(M)$ for all sufficiently large $n$, then:
$$
\limn \Prob_{A_n \sim [U_q]^{m(n) \times n} } \left\{M \text{ is a minor of } M[A_n] \right\} = 
1
$$
\end{thm}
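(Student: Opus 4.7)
The plan is to reduce the theorem to a concentration statement about the rank of $A_n$, and then dispatch that statement with a short independence argument. Recall from the proof of Theorem~\ref{thm:free-nonasymp} that when $M$ is free, $M$ is a minor of $M[A_n]$ if and only if $A_n$ contains $r(M)$ linearly independent columns, equivalently $\rank(A_n) \geq r(M)$. So it suffices to prove $\Prob\{\rank(A_n) < r(M)\} \to 0$ as $n \to \infty$.

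To control this event, I would exploit the stochastic independence between disjoint blocks of columns. Write $r := r(M)$, and restrict to $n$ large enough that $m(n) \geq r$. Partition the $n$ columns of $A_n$ into $\lfloor n/r \rfloor$ disjoint blocks of $r$ consecutive columns each (discarding the remainder). If even one such block consists of $r$ linearly independent vectors, then $\rank(A_n) \geq r$ and we are done. Since the columns of $A_n$ are i.i.d.\ uniform over $\F_q^{m(n)}$, different blocks are mutually stochastically independent, and so
\[
\Prob\bigl\{\rank(A_n) < r\bigr\} \;\leq\; \bigl(1 - p_{m(n)}\bigr)^{\lfloor n/r \rfloor},
\]
where $p_m := \Prob\{r \text{ i.i.d.\ uniform vectors in } \F_q^m \text{ are linearly independent}\}$.

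To finish, I would use the standard count $p_m = \prod_{i=0}^{r-1}\bigl(1 - q^{i-m}\bigr)$: since $m(n) \geq r$, each factor is at least $1 - q^{i-r}$, and hence $p_{m(n)} \geq \prod_{j=1}^{r}(1 - q^{-j}) =: p^\star > 0$, a strictly positive constant depending only on $q$ and $r$. Therefore
\[
\Prob\bigl\{\rank(A_n) < r\bigr\} \;\leq\; (1 - p^\star)^{\lfloor n/r \rfloor} \;\longrightarrow\; 0,
\]
which completes the argument.

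There is no serious obstacle here: the nontrivial reduction from ``$M$ is a minor'' to ``$\rank(A_n) \geq r(M)$'' was already established in Section~\ref{subsec:free-nonasymp}, and what remains is a concentration estimate reflecting the fact that an i.i.d.\ uniform matrix with at least $r$ rows has, with high probability, many more than $r$ linearly independent columns once $n$ is large. The only mild subtlety is that one must invoke the hypothesis $m(n) \geq r(M)$ to bound $p_{m(n)}$ away from $0$ uniformly in $n$; without such a bound the geometric estimate above would be vacuous. One could alternatively prove the theorem by taking $n \to \infty$ directly in the closed form of Theorem~\ref{thm:free-nonasymp}, but the partitioning approach avoids any manipulation of Gaussian coefficients.
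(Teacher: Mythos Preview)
Your proof is correct, but it takes a different route from the paper's. The paper restricts to the first $r(M)$ \emph{rows} of $A_n$, obtaining a submatrix $B_n \sim [U_q]^{r(M)\times n}$, and then applies Corollary~\ref{corol:li-bound} to $B_n^T$ to conclude that $B_n$ has full row rank with probability exceeding $1 - q^{r(M)-n}$; full row rank of $B_n$ forces $r(M)$ linearly independent columns in $A_n$. You instead partition the \emph{columns} into $\lfloor n/r\rfloor$ independent blocks and use a geometric bound on the probability that every block fails to be full column rank. The paper's argument is slightly slicker---a single invocation of Corollary~\ref{corol:li-bound} and a cleaner explicit rate---while your column-partition approach is precisely the technique the paper later deploys in Lemma~\ref{lem:lower-bound} for non-free minors, so it has the virtue of anticipating machinery needed in the harder case.
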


Although Theorem~\ref{thm:free} can be proven by taking the limit as $n \to \infty$ of the non-asymptotic probability in Theorem~\ref{thm:free-nonasymp}, we give a simpler proof that avoids long calculations and gives a taste for the upcoming results.
\par A key ingredient of the proof is the following standard calculation of the probability that a $[U_q]^{m \times n}$ random matrix has full column rank. This appears in Lemma 3.1 from~\citep{KO84}, but for completeness we restate it below in our notation.

\begin{lemma}[Lemma 3.1 from~\citep{KO84}] \label{lem:li}
Let $m, n \in \mathbb{N}$ such that $m \geq n$. Then:
$$ \mathbb{P}_{A \sim [U_q]^{m \times n}} \{A \text{ has linearly independent columns}\}
= \prod_{i = 0}^{n-1} (1 - q^{i - m})$$
\end{lemma}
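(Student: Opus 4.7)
The plan is to build the matrix $A$ column by column and compute, for each $i$, the conditional probability that the $(i+1)$-th column lies outside the span of the first $i$ columns, given that those are already linearly independent. Because the entries of $A$ are drawn i.i.d.\ uniformly from $\F_q$, the $n$ columns of $A$ are stochastically independent, each uniform on $\F_q^m$; this stochastic independence is what will let me turn the chain rule into a clean product.

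More concretely, I would let $E_i$ denote the event that the first $i$ columns of $A$ are linearly independent, with $E_0$ the whole sample space, and write
$$
\Prob\{A \text{ has linearly independent columns}\}
= \Prob(E_n)
= \prod_{i=0}^{n-1} \Prob(E_{i+1} \mid E_i).
$$
Conditional on $E_i$, the first $i$ columns span some $i$-dimensional subspace $V \subseteq \F_q^m$, which contains exactly $q^i$ vectors. Because the $(i+1)$-th column is stochastically independent of the first $i$ columns and uniform on $\F_q^m$, the conditional probability that it avoids $V$ is
$$
\frac{q^m - q^i}{q^m} = 1 - q^{i-m},
$$
and this value does not depend on which particular $i$-dimensional subspace $V$ arose. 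Hence $\Prob(E_{i+1} \mid E_i) = 1 - q^{i-m}$, and multiplying over $i = 0, 1, \ldots, n-1$ yields the claimed expression.

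The hypothesis $m \geq n$ enters only to ensure that every factor $1 - q^{i-m}$ with $i \leq n-1$ is strictly positive (so the event genuinely has positive probability and the conditioning is well defined); without it, some factor would vanish, matching the obvious fact that $A$ cannot have $n$ linearly independent columns when $n > m$. The only step that deserves care is justifying that conditioning on $E_i$ does not bias the distribution of the $(i+1)$-th column — this is immediate from the stochastic independence of columns under $[U_q]^{m\times n}$, and is really the only nontrivial point in the argument. Everything else is bookkeeping: partitioning $E_i$ according to the realized span $V$, applying the uniform conditional probability above, and summing back up.
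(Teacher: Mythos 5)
Your proof is correct and is essentially the same argument the paper uses (which it in turn borrows from Kelly--Oxley): a chain-rule decomposition into the conditional probabilities that each successive column avoids the span of the previous linearly independent ones, each factor being $1-q^{i-m}$. Your extra care in justifying the conditioning via stochastic independence of the columns and partitioning over the realized span is a welcome bit of rigor the paper leaves implicit, but it is not a different approach.
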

One can prove Lemma~\ref{lem:li} as a special case of Lemma~\ref{lem:gaussian-coeff}, but there is a cleaner proof that can be found in~\citep{KO84}. Since their proof is quite short, and since we will use similar techniques later in the paper, we present their proof below using our notation.
\begin{proof}
Let $v_i$ denote the $i^{\text{th}}$ column of $A$ for all $i \in \{1, \dots, n\}$. A simple calculation shows:
\begin{align}
&\;\mathbb{P}_{A \sim [U_q]^{m \times n}}\left\{A \text{ has lin. indep. columns}\right\} 
\\= &\;\prod_{i=1}^{n}\mathbb{P}_{A \sim [U_q]^{m \times n}} \left\{v_{i} \notin \text{span}\left(\{v_1, \ldots, v_{i-1} \}\right) \; \big| \; \{v_1, \ldots, v_{i - 1}\} \text{ are lin. indep.}\right\}
\\ =&\; \prod_{i = 1}^{n} (1 - q^{i - 1 - m})
\end{align}
\end{proof}
We will be particularly interested in lower bounds on this probability of $A$ having full column rank. In such situations, the following bound will often prove useful:
\begin{align}
\prod_{i = 0}^{n-1} (1 - q^{i - m})
\geq \; 1 - \sum_{i = 0}^{n-1} q^{i - m}
> \;1 - q^{n - m}\label{eq:product-sum-bound}
\end{align}
Since we will make much use of this lower bound, let us state it formally. 
\begin{corol}\label{corol:li-bound}
Let $m, n \in \mathbb{N}$ such that $m \geq n$. Then:
$$ \mathbb{P}_{A \sim [U_q]^{m \times n}} \{A \text{ has linearly independent columns}\}
> 1 - q^{n - m} $$
\end{corol}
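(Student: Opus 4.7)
The plan is to apply Lemma~\ref{lem:li} to rewrite the probability as the exact product $\prod_{i=0}^{n-1}(1 - q^{i-m})$, and then invoke the two-step chain already displayed in~\eqref{eq:product-sum-bound} to bound this product strictly below by $1 - q^{n-m}$. The corollary is essentially a packaging of this chain into a standalone statement, so no substantive obstacle arises; the work is just to verify that each inequality in the chain is valid in the stated regime $m \geq n$.

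The first step of the chain is the general fact
$$
\prod_{i=0}^{n-1}(1 - a_i) \;\geq\; 1 - \sum_{i=0}^{n-1} a_i
$$
for any $a_i \in [0, 1]$, which I would prove by a one-line induction on $n$: the inductive step multiplies $1 - \sum_{i < n} a_i$ by $1 - a_n$ and discards the nonnegative cross term $a_n \sum_{i < n} a_i$. This applies with $a_i = q^{i - m}$, since $i \leq n - 1 \leq m - 1$ ensures $q^{i - m} \in [0, 1]$. The second step is a strict geometric-sum bound:
$$
\sum_{i=0}^{n-1} q^{i-m} \;=\; q^{-m} \cdot \frac{q^n - 1}{q - 1} \;<\; q^{n-m},
$$
which uses that $q \geq 2$ is a prime power, so $(q^n - 1)/(q - 1) < q^n$. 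Chaining $\geq$ with strict $>$ yields the desired strict inequality, so the corollary follows.

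If anything counts as a ``main obstacle'' here, it is only bookkeeping: making sure the strict inequality is preserved (which requires the second, geometric-sum step to be strict, not the product-versus-sum step) and checking the boundary index $i = n-1$ against the hypothesis $m \geq n$ so that every factor $1 - q^{i-m}$ is nonnegative and the elementary inequality on products applies.
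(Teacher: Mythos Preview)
Your proposal is correct and follows exactly the paper's approach: the corollary is simply the chain displayed in~\eqref{eq:product-sum-bound} packaged as a standalone statement, and you have faithfully reproduced (and justified) both steps of that chain. The only difference is that you supply explicit justifications---the induction for $\prod(1-a_i)\geq 1-\sum a_i$ and the geometric-sum computation---which the paper leaves implicit.
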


We are now ready to prove Theorem~\ref{thm:free}.
\begin{proof}[Proof of Theorem~\ref{thm:free}]
We are given that $m(n) \geq r(M)$ for all sufficently large $n$. For these $n$, define the submatrix $B_n \sim [U_q]^{r(M) \times n}$ containing the first $r(M)$ rows of $A_n$. Applying Corollary~\ref{corol:li-bound} to $B_n^T$, $B_n$ has full row rank a.a.s. as $n \to \infty$. Whenever this occurs, there must exist $r(M)$ linearly independent columns of $B_n$. Clearly the corresponding columns from $A_n$ must also be linearly independent. Thus the submatroid of $M[A_n]$ formed by the column dependence of these columns is isomorphic to $M$, so $M$ is a minor of $M[A_n]$.
\end{proof}

\section{Probability of Containing a Non-Free Minor when $m(n) \geq n$}\label{sec:m-bigger}
We now investigate the probability that a \textit{non-free}, $\F_q$-representable matroid $M$ is a minor of $M[A_n]$, in the case that $A_n \sim [U_q]^{m(n) \times n}$ has \textit{at least as many rows as columns}.
\subsection{$M$ is a.a.s. not a minor of $M[A_n]$ when $m(n) - n \to \infty$}\label{subsec:m-bigger-infty}
The main result of this section establishes that if $m(n) - n \to \infty$, then $M$ is a.a.s. not a minor of $M[A_n]$. This is formally stated as follows.
\begin{thm}\label{thm:m-bigger-asymp}
Let $\F_q$ be any finite field, and $M$ be any non-free, $\F_q$-representable matroid. If $m: \mathbb{N} \rightarrow \mathbb{N}$ satisfies $m(n) - n \to \infty$, then:
$$
\limn \Prob_{A_n \sim [U_q]^{m(n) \times n} } \left\{M \text{ is a minor of } M[A_n] \right\} = 
0
$$
\end{thm}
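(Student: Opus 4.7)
The plan is to reduce the theorem to the simple fact that minors of a free matroid are themselves free. Concretely, I would first show that when $m(n) - n \to \infty$, the random matroid $M[A_n]$ is asymptotically almost surely the free matroid $U_{n,n}$ on $n$ elements. Then I would invoke the structural observation that any non-free matroid cannot arise as a minor of a free matroid, which immediately forces $\Prob\{M \text{ is a minor of } M[A_n]\} \to 0$.

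For the first step, note that $M[A_n]$ is the free matroid on $n$ elements precisely when all $n$ columns of $A_n$ are linearly independent. Since $m(n) - n \to \infty$, in particular $m(n) \geq n$ for all sufficiently large $n$, so Corollary~\ref{corol:li-bound} applies and yields
$$
\Prob_{A_n \sim [U_q]^{m(n) \times n}}\{A_n \text{ has lin. indep. columns}\} > 1 - q^{n - m(n)}.
$$
The right-hand side tends to $1$ as $n \to \infty$, so $M[A_n]$ is a.a.s.\ free.

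For the second step, I would verify (in one line) that every minor of a free matroid is free. This follows from the standard rank formulas: deletion in a free matroid is obviously free, and contracting a subset $T$ in the free matroid on $E$ gives a matroid on $E \setminus T$ whose rank function is $r_{M/T}(X) = |X \cup T| - |T| = |X|$, again free. Hence if $M[A_n]$ is free, none of its minors can be the non-free matroid $M$. Combining the two steps,
$$
\Prob\{M \text{ is a minor of } M[A_n]\} \leq \Prob\{M[A_n] \text{ is not free}\} < q^{n - m(n)} \to 0,
$$
completing the proof.

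There is no real obstacle here; the content of the argument is essentially the non-asymptotic bound from Corollary~\ref{corol:li-bound} together with the trivial structural remark on free matroids. The only thing to be slightly careful about is ensuring the contraction part of the structural remark is stated correctly, since the proof's strength rests entirely on it ruling out \emph{all} minors rather than merely submatroids.
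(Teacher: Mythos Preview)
Your proposal is correct and follows essentially the same route as the paper: the paper first isolates the claim that $M[A_n]$ is a.a.s.\ free (its Lemma~\ref{lem:free-aas}, proved exactly as you do via Corollary~\ref{corol:li-bound}), and then concludes immediately that a non-free $M$ cannot be a minor. The only cosmetic difference is that the paper asserts ``a free matroid has no non-free minor'' without proof, whereas you supply the one-line rank-function verification.
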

In order to prove Theorem~\ref{thm:m-bigger-asymp}, we will first give a strong characterization of the asymptotic structure of the matroid $M[A_n]$ when $m(n) - n \to \infty$: it is a.a.s. the free matroid. We note that this extends Theorem 3.2 of~\citep{KO84}.
\begin{lemma}\label{lem:free-aas}
Let $\F_q$ be any finite field, $m : \N \to \N$ satisfy $m(n) - n \to \infty$, and $A_n \sim [U_q]^{m(n) \times n}$. Then $M[A_n]$ is a.a.s. the free matroid on $n$ elements.
\end{lemma}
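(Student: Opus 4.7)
The plan is to reduce the statement to an immediate application of Corollary~\ref{corol:li-bound}. First I would observe that $M[A_n]$ is the free matroid on $n$ elements if and only if every subset of columns of $A_n$ is linearly independent, which in turn is equivalent to the columns of $A_n$ being jointly linearly independent (since any subset of a linearly independent set is linearly independent). Thus the statement ``$M[A_n]$ is the free matroid on $n$ elements'' is exactly the event ``$A_n$ has linearly independent columns.''

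Next, since $m(n) - n \to \infty$, we have $m(n) \geq n$ for all sufficiently large $n$, so Corollary~\ref{corol:li-bound} applies and yields
$$
\Prob_{A_n \sim [U_q]^{m(n) \times n}}\{A_n \text{ has lin. indep. columns}\} > 1 - q^{n - m(n)}.
$$
Because $m(n) - n \to \infty$, the right-hand side tends to $1$ as $n \to \infty$, establishing the claim.

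There is essentially no obstacle here; the only thing to be careful about is that ``free matroid on $n$ elements'' really does correspond to the event that all $n$ columns are simultaneously linearly independent (rather than, say, some weaker condition like each column individually being nonzero). Once that equivalence is stated, the result is an immediate corollary of the already-established bound, and the same reasoning will then feed into the proof of Theorem~\ref{thm:m-bigger-asymp}: since a non-free matroid $M$ cannot be a minor of the free matroid (all minors of the free matroid are free), and since $M[A_n]$ is a.a.s. free, $M$ is a.a.s. not a minor of $M[A_n]$.
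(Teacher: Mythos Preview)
Your proposal is correct and matches the paper's own proof essentially line for line: both note that $M[A_n]$ being free is equivalent to $A_n$ having linearly independent columns, then invoke Corollary~\ref{corol:li-bound} and use $m(n)-n\to\infty$ to send the bound $1-q^{\,n-m(n)}$ to $1$. Your extra remark anticipating the proof of Theorem~\ref{thm:m-bigger-asymp} is also exactly how the paper proceeds.
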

\begin{proof}
By Corollary~\ref{corol:li-bound}, $A_n \sim [U_q]^{m(n) \times n}$ has full column rank a.a.s. as $n \to \infty$. Whenever this occurs, $M[A_n]$ is the free matroid over $n$ elements.
\end{proof}

The proof of Theorem~\ref{thm:m-bigger-asymp} now follows directly from Lemma~\ref{lem:free-aas}.

\begin{proof}[Proof of Theorem~\ref{thm:m-bigger-asymp}]
By Lemma~\ref{lem:free-aas}, $M[A_n]$ a.a.s. is the free matroid over $n$ elements, and thus cannot contain any non-free matroid $M$ as a minor.
\end{proof}

\subsection{Probability that $M$ is a minor of $M[A_n]$ in the general case when $m(n) \geq n$}\label{subsec:m-bigger-gen}
Theorem~\ref{thm:m-bigger-asymp} above gives a clean characterization of $\limn \Prob_{A_n \sim [U_q]^{m(n) \times n} } \left\{M \text{ is a minor of } M[A_n] \right\}$ when $m(n) - n \to \infty$. Here, we give asymptotic and non-asymptotic bounds for the more general case of $m(n) \geq n$.
\par We first present the non-asymptotic bound, since the asymptotic result follows directly from it.
\begin{thm}\label{thm:m-bigger-gen-nonasymp}
Let $\F_q$ be any finite field, and $M$ be any non-free, $\F_q$-representable matroid. If $m \geq n$, then:
\begin{align}
\Prob_{A \sim [U_q]^{m \times n} } \left\{M \text{ is a minor of } M[A] \right\} \leq 1 - \prod_{i=0}^{n-1} (1 - q^{i -m})
\end{align}
\end{thm}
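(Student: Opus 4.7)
The plan is to bound the probability of the event $\{M \text{ is a minor of } M[A]\}$ by the probability of the strictly larger event $\{M[A] \text{ is not the free matroid}\}$, then use Lemma~\ref{lem:li} to compute the probability of the complement of that event exactly.

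The key structural observation is that every minor of a free matroid is itself free. Indeed, if $M[A]$ is the free matroid on $n$ elements, then every subset of its ground set is independent; deleting elements clearly preserves this property, and contracting an independent subset $X$ from a free matroid leaves the free matroid on the remaining ground set, since independence of $Y$ in $M/X$ is equivalent to independence of $X \cup Y$ in $M$, which holds trivially. So whenever $M[A]$ is free, no non-free matroid $M$ can occur as a minor.

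Now I would translate this into the required probability bound. First, $M[A]$ is the free matroid on $n$ elements precisely when the $n$ columns of $A$ are linearly independent. By Lemma~\ref{lem:li}, since $m \geq n$, the probability of this event is exactly $\prod_{i=0}^{n-1}(1 - q^{i-m})$. Combining with the previous paragraph,
\begin{align*}
\Prob_{A \sim [U_q]^{m \times n}}\{M \text{ is a minor of } M[A]\}
&\leq \Prob_{A \sim [U_q]^{m \times n}}\{M[A] \text{ is not free}\} \\
&= 1 - \prod_{i=0}^{n-1}(1 - q^{i-m}),
\end{align*}
which is the desired non-asymptotic bound.

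There is essentially no hard step here: the proof is a direct deduction from Lemma~\ref{lem:li} plus the standard fact that free matroids are closed under minors. The only thing to verify carefully is that contraction of a free matroid is free, which follows immediately from the definition of contraction together with the fact that every subset is independent. The real content of the theorem is that, in the regime $m \geq n$, the only obstruction to $M$ appearing as a minor that we can control cleanly is the event that $A$ already has full column rank; tighter bounds would need to account for non-free matroids whose representations nevertheless fail to contain $M$ as a minor, and this is what later results in the paper presumably address.
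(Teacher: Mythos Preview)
Your proof is correct and follows exactly the same approach as the paper: use Lemma~\ref{lem:li} to compute the probability that $A$ has full column rank, and observe that in that case $M[A]$ is free and hence cannot contain the non-free $M$ as a minor. The only difference is that you spell out explicitly why free matroids are closed under minors, which the paper takes for granted.
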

\begin{proof}
By Lemma~\ref{lem:li}, $A$ has full column rank with probability $\prod_{i=0}^{n - 1}(1 - q^{i - m})$. Whenever this occurs, $M$ is the free matroid over $n$ elements, and thus cannot contain the non-free matroid $M$ as a minor.
\end{proof}

The main asymptotic result of this section now follows directly from taking the limit (superior) of the bound in Theorem~\ref{thm:m-bigger-gen-nonasymp}.

\begin{thm} \label{thm:m-bigger-gen-asymp}
Let $\F_q$ be any finite field, and $M$ be any non-free, $\F_q$-representable matroid. If $m: \mathbb{N} \rightarrow \mathbb{N}$ satisfies $m(n) \geq n$ for all sufficiently large $n$, then:
\begin{align}
\limsup_{n \to \infty} \Prob_{A_n \sim [U_q]^{m(n) \times n} } \left\{M \text{ is a minor of } M[A_n] \right\} \leq 1 - C_q
\end{align}
where $C_q = \prod_{k=1}^{\infty} \left(1 - q^{-k}\right)$.
\end{thm}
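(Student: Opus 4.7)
The plan is to obtain the asymptotic bound as a direct corollary of the non-asymptotic bound established in Theorem~\ref{thm:m-bigger-gen-nonasymp}. For all sufficiently large $n$ (namely, those with $m(n) \geq n$), that theorem gives
$$\Prob_{A_n \sim [U_q]^{m(n) \times n}}\{M \text{ is a minor of } M[A_n]\} \leq 1 - \prod_{i=0}^{n-1}(1 - q^{i - m(n)}).$$
It therefore suffices to show that the product on the right is bounded below by $C_q$ uniformly over all such $n$; taking $\limsup$ of both sides then yields the theorem.

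To establish this lower bound, I would reindex by substituting $j = m(n) - i$, so that as $i$ ranges over $\{0, 1, \ldots, n-1\}$, the new index $j$ ranges over $\{m(n) - n + 1, \ldots, m(n)\}$. The product then becomes $\prod_{j = m(n) - n + 1}^{m(n)} (1 - q^{-j})$. The hypothesis $m(n) \geq n$ forces $m(n) - n + 1 \geq 1$, so every index $j$ in the product lies in the positive integers, meaning the product is taken over a finite subset of the index set for the infinite product $C_q = \prod_{k=1}^{\infty}(1 - q^{-k})$.

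Since each factor $1 - q^{-j}$ lies in $(0, 1)$, discarding any subset of factors from the infinite product only increases its value; hence $\prod_{j = m(n) - n + 1}^{m(n)} (1 - q^{-j}) \geq C_q$. Combining this with the non-asymptotic bound yields $\Prob_{A_n \sim [U_q]^{m(n) \times n}}\{M \text{ is a minor of } M[A_n]\} \leq 1 - C_q$ for all sufficiently large $n$, and the limit superior inherits this inequality.

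There is essentially no obstacle here: once Theorem~\ref{thm:m-bigger-gen-nonasymp} is in hand, the derivation is just reindexing plus monotonicity of finite truncations of an infinite product of factors in $(0,1)$. The one mild subtlety worth flagging in writing is that $C_q$ is a well-defined positive constant, which follows from absolute convergence of $\sum_{k \geq 1} q^{-k}$ and the standard fact that $\prod (1 - a_k)$ converges to a positive limit when $\sum a_k < \infty$ and each $a_k \in [0,1)$; this could be noted in a single sentence or cited.
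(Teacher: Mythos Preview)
Your proposal is correct and follows essentially the same approach as the paper: both invoke Theorem~\ref{thm:m-bigger-gen-nonasymp} and then argue that, because $m(n)\geq n$, the product $\prod_{i=0}^{n-1}(1-q^{i-m(n)})$ is bounded below by $C_q$. The only cosmetic difference is that the paper first replaces $m(n)$ by $n$ to get the partial product $\prod_{k=1}^{n}(1-q^{-k})$ and observes this decreases to $C_q$, whereas you reindex directly and appeal to monotonicity of the infinite product; these are the same argument in slightly different dress.
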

\begin{proof}[Proof of Theorem~\ref{thm:m-bigger-gen-asymp}] For all sufficiently large $n$, we have $m(n) \geq n$. Thus we may invoke Theorem~\ref{thm:m-bigger-gen-nonasymp} for each such $n$ to obtain the lower bound
 $\prod_{i=0}^{n-1} (1 - q^{i -m(n)}) \geq \prod_{i=0}^{n-1} (1 - q^{i-n})$, which tends to $C_q$ from above, as $n \to \infty$.
\end{proof}

Let us provide some intuition about the constant $C_q = \prod_{k=1}^{\infty} \left(1 - q^{-k}\right)$. By Lemma~\ref{lem:li}, this is precisely the limiting probability that a uniformly drawn square matrix over $\F_q$ is invertible~\citep{Cooper}:
$$
\limn \Prob_{A_n \sim [U_q]^{n \times n}}\{A \text{ nonsingular} \}
= \limn \prod_{k=1}^{n} \left(1 - q^{-k}\right) = \prod_{k=1}^{\infty} \left(1 - q^{-k}\right) = C_q
$$
Recall Euler's famous Pentagonal Number Theorem \citep{Pentagonal-Number-Theorem}, which states the following identity holds and converges absolutely for all $|x| < 1$: 
$$
\prod_{k=1}^{\infty} (1 - x^{k}) = \sum_{k=0}^{\infty} \left(-1\right)^k \left(1 - x^{2k+1}\right)x^{k(3k+1)/2}
$$
Thus in particular we have that $C_q > 1 - \frac{1}{q} - \frac{1}{q^2} > 0$ is a well-defined constant bounded away from $0$, since the size $q$ of any Galois field $\F_q$ is at least $2$.

\section{Probability of Containing a Non-Free Minor when $n > m(n)$}\label{sec:n-bigger}
We now investigate the probability that a \textit{non-free}, $\F_q$-representable matroid $M$ is a minor of $M[A_n]$, in the case that $A_n \sim [U_q]^{m(n) \times n}$ has \textit{more columns than rows}.

\subsection{A non-asymptotic lower bound on the probability $M$ is a minor of $M[A_n]$}\label{subsec:n-bigger-nonasymp}
The main result in this section is the following non-asymptotic lower bound. 

\begin{thm}\label{thm:subsec-n-bigger-gen-new}
Let $\F_q$ be any finite field, and $M = (E,I)$ be any non-free, $\F_q$-representable matroid with $\ell$ loops. If $m, n \in \mathbb{N}$ satisfy (i) $m \geq r(M)$; and (ii) $n \geq |E|$, then:
$$
\Prob_{A \sim [U_q]^{m \times n}} \left\{M \text{ is a minor of } M[A] \right\}
>
\max_{k \in \mathbb{Z}_+, \; k \leq \min(n-|E|, m-r(M))}
\left( 1 - q^{-(n-k)}\right)
\left(1 - \left(1 - p_{m-k,q,M} \right)^{\lfloor \frac{n-k}{|E|} \rfloor}\right)
$$
where $p_{s,q,M} \in (0, 1)$ is defined as:
$$p_{s,q,M}
=
\binom{|E|}{\ell} \left(\frac{\left(q-1\right)^{|E| - r(M) - \ell}}
{q^{s(|E| - r(M))}}\right)
\prod_{i=0}^{r(M)-1}\left(1 - q^{i-s}\right)
$$
\end{thm}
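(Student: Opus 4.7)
Fix an admissible $k$; I will establish the corresponding lower bound, after which taking the maximum over $k$ yields the theorem. My plan is to decompose the event $\{M \text{ is a minor of } M[A]\}$ into a \emph{reduction step}, where $k$ columns of $A$ are contracted to produce a smaller, still-uniform random matrix $A'$, and a \emph{search step}, where one looks for a direct $(m-k) \times |E|$ sub-block of $A'$ that represents $M$. The two factors on the right-hand side correspond to lower bounds on the success probabilities of these two steps, combined via the chain rule.

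For the reduction step, let $A_1 \in \F_q^{k \times n}$ denote the top $k$ rows of $A$. Applying Corollary~\ref{corol:li-bound} to $A_1^{\top} \sim [U_q]^{n \times k}$ (valid because $n \geq |E| + k \geq k$) yields $\Prob\{A_1 \text{ has full row rank}\} > 1 - q^{-(n-k)}$. On this rank event, some $k$ columns of $A$ restrict to an invertible $k \times k$ submatrix of $A_1$; call these the pivot columns. Row operations, which do not change $M[A]$, transform the pivot columns into $e_1, \ldots, e_k \in \F_q^m$, and contracting them in $M[A]$ amounts to deleting the first $k$ rows and the pivot columns, producing a matrix $A' \in \F_q^{(m-k)\times(n-k)}$ with $M[A'] = M[A]/\{\text{pivots}\}$. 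The key claim --- to be developed in Section~\ref{subsec:n-bigger-reducing} --- is that conditional on the rank event, $A' \sim [U_q]^{(m-k)\times(n-k)}$. The intuition is that each row of $A'$ arises from applying a surjective $\F_q$-linear map (depending only on $A_1$) to the corresponding row of the last $m-k$ rows of $A$; surjective linear maps send uniform inputs to uniform outputs, jointly independently across rows.

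For the search step, partition the $n-k$ columns of $A'$ into $\lfloor (n-k)/|E|\rfloor$ disjoint blocks of $|E|$ consecutive columns (possible because $n-k \geq |E|$). By uniformity of $A'$, these blocks are independent draws from $[U_q]^{(m-k)\times|E|}$. For a single such block $B$, I would lower bound $\Prob\{B \text{ represents } M\}$ by $p_{m-k,q,M}$ as follows: for each subset $L \subseteq \{1,\ldots,|E|\}$ of size $\ell$, consider the event that the $\ell$ columns indexed by $L$ are zero, a designated $r(M)$-subset of the remaining columns is linearly independent, and each of the other $|E| - r(M) - \ell$ columns is some nonzero scalar multiple of a specific linear combination of the designated basis columns determined by a fixed $\F_q$-representation of $M$. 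These $\binom{|E|}{\ell}$ events are pairwise disjoint, and, with $s = m - k$, each contributes probability $q^{-s\ell} \cdot \prod_{i=0}^{r(M)-1}(1 - q^{i-s}) \cdot ((q-1)/q^s)^{|E|-r(M)-\ell}$, so summing over $L$ gives exactly $p_{m-k,q,M}$. By independence of blocks, $\Prob\{\text{some block represents } M \mid \text{rank event}\} \geq 1 - (1 - p_{m-k,q,M})^{\lfloor (n-k)/|E|\rfloor}$; whenever this happens, $M$ is a minor of $M[A']$ and hence of $M[A]$. The chain rule then yields the stated bound.

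The principal obstacle is making the uniformity claim for $A'$ rigorous. The pivot column set and the row-reduction coefficients are both intricate functions of $A_1$, so the entries of $A'$ are coupled with $A_1$ in a nonobvious way, and one must show that this coupling nevertheless preserves the product uniform distribution; one must also handle the bookkeeping that relabels the non-pivot columns so that the block partition is well-defined. Developing such a general ``randomness-preserving reduction'' framework for contractions of $[U_q]^{m \times n}$-distributed matrices is the technical heart of Section~\ref{subsec:n-bigger-reducing}, and is where most of the work in proving the theorem will lie.
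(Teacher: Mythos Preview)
Your proposal is correct and follows essentially the same two-step architecture as the paper: the reduction step is precisely Lemma~\ref{lem:sequence} (Case~2), including the use of Corollary~\ref{corol:li-bound} on the top $k$ rows and the identification of the uniformity-after-contraction claim as the main technical point (handled in the paper via the change-of-basis Lemma~\ref{lem:change-of-basis}); the search step is the content of Lemmas~\ref{lem:lower-bound} and~\ref{lem:num-reps}, and your disjoint-events counting over the $\binom{|E|}{\ell}$ loop placements recovers exactly $p_{m-k,q,M}$. Your surjective-linear-map framing of the uniformity claim is a slight rephrasing of the paper's change-of-basis argument but leads to the same conclusion.
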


The proof of Theorem~\ref{thm:subsec-n-bigger-gen-new} is delayed until Section~\ref{subsec:n-bigger-proofs}, since we will first need to develop tools to analyze minors of random matroids (see Sections~\ref{subsec:n-bigger-outlines},~\ref{subsec:n-bigger-reducing}, and~\ref{subsec:n-bigger-minor}). But before describing those tools, let us first state in Section~\ref{subsec:n-bigger-infty} two direct consequences of this result for the asymptotic setting of $n \to \infty$.

\subsection{Asymptotic probability that $M$ is a minor of $M[A_n]$ when $n > m(n)$}\label{subsec:n-bigger-infty}
This section contains two results on the asymptotic probability that $M$ is a minor of $M[A_n]$ when $n > m(n)$. The first result gives a general lower bound on this probability. The second result shows that if additionally $n - m(n) \to \infty$, then this probability tends to $1$; that is, $M$ a.a.s. is a minor of $M[A_n]$. Both of these results are proved as direct corollaries of Theorem~\ref{thm:subsec-n-bigger-gen-new}, the first by setting $k(n) := m(n) + 1 - |E|$; and the second by setting $k(n) := m(n) - r(M)$.
\par Let us first present the general lower bound.

\begin{thm} \label{thm:n-bigger-gen-asymp}
Let $\F_q$ be any finite field, and $M = (E, I)$ be any non-free, $\F_q$-representable matroid with $\ell$ loops. If $m: \mathbb{N} \rightarrow \mathbb{N}$ satisfies $n > m(n) \geq |E|$ for all sufficiently large $n$, then:
$$
\liminf_{n \to \infty} \Prob_{A_n \sim [U_q]^{m(n) \times n} } \left\{M \text{ is minor of } M[A_n] \right\}
>
\left(1 - q^{-|E|}\right) p_{|E| - 1, q, M}
$$
\end{thm}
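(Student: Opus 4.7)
The plan is to apply Theorem~\ref{thm:subsec-n-bigger-gen-new} with the specific choice $k := m(n) + 1 - |E|$, and then take $\liminf_{n \to \infty}$. This choice is natural because it pins the parameter $m(n) - k$ to the fixed value $|E| - 1$, making $p_{m(n)-k,q,M}$ equal to the constant $p_{|E|-1,q,M}$ that appears on the right-hand side.

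First, I would verify that $k$ lies in the valid range $\{1, 2, \ldots, \min(n-|E|,\, m(n)-r(M))\}$ required by Theorem~\ref{thm:subsec-n-bigger-gen-new}. Positivity $k \geq 1$ follows from $m(n) \geq |E|$. The inequality $k \leq n - |E|$ follows from $m(n) < n$, i.e., $m(n) + 1 \leq n$. Finally, since $M$ is non-free it must contain a dependent element, so $r(M) \leq |E| - 1$, which gives $k = m(n) + 1 - |E| \leq m(n) - r(M)$.

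Next, I would substitute $n - k = n - m(n) - 1 + |E|$ and $m(n) - k = |E| - 1$ into Theorem~\ref{thm:subsec-n-bigger-gen-new} to obtain
\begin{align*}
&\Prob_{A_n \sim [U_q]^{m(n) \times n}}\left\{M \text{ is minor of } M[A_n]\right\} \\
&\quad > \left(1 - q^{-(n - m(n) - 1 + |E|)}\right)\left(1 - (1 - p_{|E|-1,q,M})^{\lfloor (n - m(n) - 1 + |E|)/|E| \rfloor}\right).
\end{align*}
Since $n > m(n)$ gives $n - m(n) - 1 + |E| \geq |E|$, the first factor is at least $1 - q^{-|E|}$ and the floor in the exponent is at least $1$, making the second factor at least $p_{|E|-1, q, M}$. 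Taking $\liminf_{n \to \infty}$ then yields the stated lower bound.

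Since Theorem~\ref{thm:subsec-n-bigger-gen-new} is being taken as a black box here, there is essentially no obstacle -- the proof reduces to bookkeeping once the choice of $k$ is made. The only minor subtlety is recognizing that $M$ being non-free forces $r(M) \leq |E| - 1$ (a matroid is free iff every subset of $E$ is independent iff $r(M) = |E|$), which is what validates the bound $k \leq m(n) - r(M)$. The substantive work lies entirely in Theorem~\ref{thm:subsec-n-bigger-gen-new}, whose proof is deferred to Section~\ref{subsec:n-bigger-proofs}.
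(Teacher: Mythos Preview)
Your proposal is correct and follows essentially the same approach as the paper: both apply Theorem~\ref{thm:subsec-n-bigger-gen-new} with the choice $k(n) = m(n) + 1 - |E|$, note that this forces $m(n)-k = |E|-1$ and $n-k \geq |E|$, and finish with the elementary bound $1-(1-p)^t \geq p$ for $t \geq 1$. Your verification that $k$ lies in the admissible range (in particular, that $r(M) \leq |E|-1$ since $M$ is non-free) is more explicit than the paper's terse version, but the argument is the same.
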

\begin{proof}
By assumption, all sufficiently large $n \in \mathbb{N}$ satisfy $m(n) \geq r(M)$ and $n \geq |E|$. For all such $n$, apply Theorem~\ref{thm:subsec-n-bigger-gen-new} with $k(n) = m(n) + 1 - |E|$. The desired result then follows by observing that $\liminf_{n \to \infty} (n - k(n)) \geq |E|$, that $p_{m(n) - k(n), q, M} = p_{|E| - 1, q, M}$, and that $1 - (1 - p)^t \geq p$ for any $p \in (0, 1)$ and $t \geq 1$. 
\end{proof}
\par Now for the second result: we show that if additionally $n - m(n) \to \infty$, then $M$ a.a.s. is a minor of $M[A_n]$. This is formally stated as follows.
\begin{thm}\label{thm:n-bigger-asymp}
Let $\F_q$ be any finite field, and $M = (E,I)$ be any non-free, $\F_q$-representable matroid. If $m: \mathbb{N} \rightarrow \mathbb{N}$ satisfies (i) $m(n) \geq r(M)$ for all sufficiently large $n$; and (ii) $n - m(n) \to \infty$, then:
$$
\limn \Prob_{A_n \sim [U_q]^{m(n) \times n} } \left\{M \text{ is a minor of } M[A_n] \right\} = 
1
$$
\end{thm}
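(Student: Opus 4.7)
The plan is to apply Theorem~\ref{thm:subsec-n-bigger-gen-new} with the specific choice $k(n) := m(n) - r(M)$, as signaled by the introduction to this subsection. First I would check that the hypotheses of Theorem~\ref{thm:subsec-n-bigger-gen-new} are met for all sufficiently large $n$: hypothesis (i) $m(n) \geq r(M)$ holds by assumption, while hypothesis (ii) $n \geq |E|$ holds for all large $n$ since $n \to \infty$. I would also verify that $k(n)$ lies in the allowed range $k \leq \min(n - |E|, m(n) - r(M))$. The inequality $k(n) \leq m(n) - r(M)$ holds by construction (with equality), and the inequality $k(n) \leq n - |E|$ rearranges to $n - m(n) \geq |E| - r(M)$, which holds for all sufficiently large $n$ because $n - m(n) \to \infty$ while $|E| - r(M)$ is a fixed positive integer (positive because $M$ is non-free).

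With $k(n) = m(n) - r(M)$ plugged into the bound of Theorem~\ref{thm:subsec-n-bigger-gen-new}, we have $m(n) - k(n) = r(M)$ and $n - k(n) = n - m(n) + r(M)$, yielding the lower bound
$$
\Prob_{A_n \sim [U_q]^{m(n) \times n}} \{M \text{ is a minor of } M[A_n]\}
>
\left(1 - q^{-(n - m(n) + r(M))}\right)
\left(1 - (1 - p_{r(M), q, M})^{\lfloor (n - m(n) + r(M))/|E| \rfloor}\right).
$$
The crucial feature of this choice of $k(n)$ is that $p_{r(M), q, M}$ is a fixed constant in $(0, 1)$ depending only on $M$ and $q$, not on $n$. (By the stated guarantee $p_{s,q,M} \in (0,1)$ of Theorem~\ref{thm:subsec-n-bigger-gen-new}, applied at $s = r(M)$, which is admissible since this corresponds to the boundary case $k = m - r(M)$ of the admissible range.)

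To conclude, since $n - m(n) \to \infty$, both the exponent $n - m(n) + r(M)$ and the floor $\lfloor (n - m(n) + r(M))/|E| \rfloor$ tend to $\infty$. Hence $q^{-(n - m(n) + r(M))} \to 0$ and $(1 - p_{r(M), q, M})^{\lfloor (n - m(n) + r(M))/|E| \rfloor} \to 0$ (the latter because $1 - p_{r(M), q, M} \in (0, 1)$ is a fixed constant strictly less than $1$). Therefore the right-hand side of the displayed inequality tends to $1$, forcing the probability on the left-hand side to tend to $1$ as well.

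The only real subtlety is ensuring $p_{r(M), q, M}$ is bounded away from $0$ uniformly in $n$; once the hint $k(n) = m(n) - r(M)$ is used, this falls out of the fact that the $\prod_{i=0}^{r(M)-1}(1 - q^{i - r(M)})$ factor is a strictly positive constant, and every other factor in the definition of $p_{r(M),q,M}$ is a positive constant independent of $n$. The remaining book-keeping — validating the range constraints on $k(n)$ for large $n$ and taking the limit — is essentially mechanical.
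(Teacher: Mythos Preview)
Your proposal is correct and follows essentially the same approach as the paper's proof: both apply Theorem~\ref{thm:subsec-n-bigger-gen-new} with the choice $k(n) = m(n) - r(M)$, observe that $p_{r(M),q,M}$ is then a fixed constant in $(0,1)$ independent of $n$, and conclude by noting that both factors in the resulting lower bound tend to $1$ since $n - k(n) = n - m(n) + r(M) \to \infty$. Your write-up is slightly more explicit in verifying the range constraints on $k(n)$ and the positivity of $p_{r(M),q,M}$, but the argument is the same.
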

\begin{proof}
By assumption, all sufficiently large $n \in \mathbb{N}$ satisfy $m(n) \geq r(M)$, $n \geq |E|$, and $n - |E| \geq m(n) - r(M)$. For all such $n$, apply Theorem~\ref{thm:subsec-n-bigger-gen-new} with $k(n) := m(n) - r(M)$ to obtain the lower bound:
$$
\Prob_{A \sim [U_q]^{m \times n}} \left\{M \text{ is a minor of } M[A] \right\}
>
\left( 1 - q^{-(n-k(n))}\right)
\left(1 - \left(1 - p_{r(M),q,M} \right)^{\lfloor \frac{n-k(n)}{|E|} \rfloor}\right)
$$
Both factors in the lower bound clearly tend to $1$ as $n \to \infty$, because $n- k(n) \to \infty$ and $p_{r(M), q, M}$ is independent of $n$. Since the limit of products is equal to the product of limits (if they exist), and since a probability measure is bounded above by $1$, the limiting probability of $M$ being a minor of $M[A_n]$ is $1$.
\end{proof}

\subsection{Proof outline for Theorem~\ref{thm:subsec-n-bigger-gen-new}}\label{subsec:n-bigger-outlines}
\par There are, roughly speaking, two main steps in the proof. Informally, these are: (1) finding a sequence of matroid operations on $M[A]$ that produce an ``appropriately sized'' random representable matroid $M[B]$; and (2) bounding below the probability that $M$ is a minor of $M[B]$. We develop the tools for the first step in Section~\ref{subsec:n-bigger-reducing}, and the the tools for the second in Section~\ref{subsec:n-bigger-minor}.
\par For clarity of explaining these steps, however, let us first describe the second step, since it will motivate why we need the first one to get a good final bound. For notational convenience, let us denote by $R_{t,q}(M)$ the set of $t \times |E|$ representations of an $\F_q$-representable matroid $M = (E,I)$ over $\F_q$. To start with, let us assume for simplicity that $n = |E|$. Then we can bound below the probability that $M$ is a minor of $M[A]$, by the probability that $A \in R_{m,q}(M)$. Since $A$ is drawn from the uniform distribution $[U_q]^{m \times |E|}$, $A$ is equal to a fixed element of $R_{m,q}(M)$ with probability $q^{-m|E|}$. Thus:
\begin{align}
\Prob_{A \sim [U_q]^{m \times |E|}}\{M \text{ is a minor of } M[A] \}
\geq
q^{-m|E|}\left|R_{m,q}(M) \right|
\label{eq:outline-lb}
\end{align}
Thus it suffices to bound below the number $\left|R_{m,q}(M) \right|$ of $m \times |E|$ representations of $M$ over $\F_q$. We do precisely this (see Lemma~\ref{lem:num-reps}), which immediately gives a bound for the case when $A$ is of dimension $m \times |E|$.
\par However, the story is not quite finished. Unfortunately, the number of representations $|R_{m,q}(M)|$ increases at a rate of roughly ${m \brack r(M)}_q \approx q^{mr(M)}$ as a function of $m$, which gets exponentially overrun by the $q^{-m|E|}$ factor in equation~\eqref{eq:outline-lb}. Therefore, if we do not have any additional tools, any bounds would become weaker exponentially fast in terms of $m$.
\par This motivates the first step in the proof, in which we, informally, ``reduce'' the number of rows of $A$. Specifically, we first extract from $M[A]$ a minor $M[B]$ that is a random representable matroid with $m-k \geq r(M)$ rows instead of $m$ rows. Since $M$ is a minor of $M[A]$ whenever $M$ is a minor of $M[B]$, we can then apply the above techniques and get a bound which decays in $m-k$ rather than in $m$.
\par So let us describe how to extract the minor $M[B]$ from $M[A]$. It is not immediately obvious how to do this on the \textit{random} matroid $M[A]$ because we can only control the minor we obtain when we apply contractions to \textit{deterministic} columns. (Clearly we need contractions because just applying deletions to the columns of $M[A]$ will not help us.) The simple but key idea is that because the elements of $A$ are stochastically independent, conditioning on the drawing of some elements of $A$ does not affect the distribution of any of the other elements. Thus the strategy will roughly be to draw $k$ columns of $A$, argue that they are linearly independent with some large probability (in terms of $k$, $m$, and $n$) by Corollary~\ref{corol:li-bound}, and then contract on (and delete) them. Informally, the resulting matroid is isomorphic to the random representable matroid $M[B]$, where $B \sim [U_q]^{(m-k) \times (n-k)}$ is of smaller dimension.
\par We note that in order for $M$ to be a minor of $M[B]$, we must have $m-k \geq r(M)$ and $n-k \geq |E|$. That is, $k$ must be bounded above by $\min(n - |E|, m - r(M))$.
\par Finally, we describe what happens when we let $n$ grow.\footnote{This could be viewed as a third, separate step, but because it is simple, we combine its details with the second step in Section~\ref{subsec:n-bigger-minor}.} If we partition the matrix $B$ into $t = \lfloor n/|E| \rfloor$ blocks $B_1, \dots, B_t$ of size $|E|$ (throwing away any excess columns), we know $M$ is a minor of $B$ if $M$ is a minor of any of the $B_i$. By the above, we know how to calculate the probability $p$ of the latter event for each $i$, since each $B_i$ has exactly $|E|$ columns. Since the $B_i$ are independent, we have that at least one of the $B_i$ contains $M$ as a minor with probability at least $1 - (1 - p)^t$.

\subsection{Finding a sequence of matroid operations to obtain an appropriately sized random minor of a random representable matroid}\label{subsec:n-bigger-reducing}
The main tool we develop in this section is the following lemma, which bounds below the probability that we can extract a random minor $M[B]$ from the random representable matroid $M[A]$, where $A \sim [U_q]^{m \times n}$ and $B \sim [U_q]^{(m-k) \times (n-k)}$ for $k \leq \min(m, n)$.
\begin{lemma} \label{lem:sequence}
Let $A \sim [U_q]^{m \times n}$, and $k \in \N$ satisfy $k \leq \min(m, n)$. Then with probability greater than
$$
1 - q^{k - \max(m, n)}
$$
there exists a sequence of contractions and deletions on $M[A]$ that produce a random representable matroid $M[B]$ corresponding to $B \sim [U_q]^{(m - k) \times (n - k)}$.
\end{lemma}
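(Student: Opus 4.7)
The plan is to split into two cases depending on whether $m \geq n$ or $n > m$, each contributing one of the bounds whose minimum is $1 - q^{k - \max(m,n)}$. The common structure is this: in each case I would identify $k$ linearly independent columns of $A$, apply row operations (which preserve $M[A]$) to turn those columns into the first $k$ standard basis vectors, and thereby put $A$ into a block form whose bottom-right $(m-k) \times (n-k)$ submatrix is some matrix $B$. Contracting on the first $k$ elements of the resulting matroid then exhibits $M[B]$ as a minor of $M[A]$. So the real work is in (i) finding the $k$ linearly independent columns with the claimed probability, and (ii) verifying that $B \sim [U_q]^{(m-k) \times (n-k)}$.

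For $m \geq n$ I would simply take the first $k$ columns of $A$, which form an $m \times k$ matrix with $m \geq k$. By Corollary~\ref{corol:li-bound} they are linearly independent with probability $> 1 - q^{k-m}$, and the row-reduction matrix $P$ used to bring them to $\binom{I_k}{0}$ depends only on these first $k$ columns; hence the remaining $n-k$ columns stay independent of $P$ and uniformly distributed over $\F_q^{m \times (n-k)}$, so the bottom-right block $B$ of $PA$ is itself distributed as $[U_q]^{(m-k) \times (n-k)}$. For the case $n > m$ a stronger bound is needed, so I would instead work with the first $k$ rows of $A$: they form a $k \times n$ matrix $U$ with $n \geq k$, and by Corollary~\ref{corol:li-bound} applied to $U^T$ they are linearly independent with probability $> 1 - q^{k-n}$. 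Conditional on this event, the pivot columns $J = \{j_1 < \cdots < j_k\}$ of $U$ index $k$ columns of $A$ that are automatically linearly independent in $\F_q^m$; I would permute these $k$ columns to the front of $A$ (which merely relabels elements of $M[A]$) and apply the explicit block row operation whose top-left block is $U_{\cdot, J}^{-1}$, bottom-left block is $-L_{\cdot, J} U_{\cdot, J}^{-1}$, and bottom-right block is $I_{m-k}$, where $L$ denotes the bottom $(m-k) \times n$ rows of $A$.

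The main obstacle is verifying that $B \sim [U_q]^{(m-k) \times (n-k)}$ in this second case, because now both $J$ and the row-reduction matrix depend on $A$ itself. The key observation is that $U$ and $L$, being built from disjoint row sets of $A$, are stochastically independent, and a direct block computation gives $B = L_{\cdot, J^c} - L_{\cdot, J} U_{\cdot, J}^{-1} U_{\cdot, J^c}$, where $J^c$ denotes the remaining column indices. Conditioning on $(U, L_{\cdot, J})$ freezes $J$ and the correction term $L_{\cdot, J} U_{\cdot, J}^{-1} U_{\cdot, J^c}$ as deterministic quantities, while $L_{\cdot, J^c}$ remains uniform over $\F_q^{(m-k) \times (n-k)}$ and independent of them. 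Since adding a constant preserves uniformity, $B \sim [U_q]^{(m-k) \times (n-k)}$, and combining the two cases yields the advertised bound $1 - q^{k - \max(m,n)}$.
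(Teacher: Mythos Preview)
Your proposal is correct and follows essentially the same approach as the paper's proof: both split into the cases $m \geq n$ and $m < n$, use Corollary~\ref{corol:li-bound} on the first $k$ columns (resp.\ rows) to obtain the bound $1 - q^{k-m}$ (resp.\ $1 - q^{k-n}$), and then row-reduce to put $A$ in block form before contracting the unit columns. Your treatment of the second case is in fact more explicit than the paper's---where the paper simply appeals to ``an identical argument to the one used in case~1'', you write out the block change-of-basis matrix and the conditioning on $(U, L_{\cdot,J})$ that shows $B = L_{\cdot,J^c} - L_{\cdot,J}U_{\cdot,J}^{-1}U_{\cdot,J^c}$ is a deterministic shift of a uniform matrix.
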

We note that in this paper, we will only use the above Lemma~\ref{lem:sequence} when $n > m$, in which case the sequence of matroid operations exists with probability greater than $1 - q^{k -n}$. However, we state the lemma in the more general form where $m$ can be larger than $n$, in the hope that this tool of analyzing matroid operations on random matrices is of independent interest.

A crucial ingredient in the proof of Lemma~\ref{lem:sequence} is the following fact, which states that the distribution $[U_q]^{m \times n}$ of random matrices is invariant under a change of basis.

\begin{lemma} \label{lem:change-of-basis}
If $A \sim [U_q]^{m \times n}$ and $P \in \mathbb{F}_q^{m \times m}$ is invertible, then $PA \sim [U_q]^{m \times n}$.
\end{lemma}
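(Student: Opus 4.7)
The plan is to exploit the fact that the uniform distribution on any finite set is invariant under any bijection on that set, and to observe that left-multiplication by an invertible matrix is a bijection on $\F_q^{m \times n}$. Concretely, I would prove the lemma by computing the pushforward probability mass function of $PA$ directly and showing it equals the uniform mass $q^{-mn}$ everywhere.

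First I would fix an arbitrary target matrix $B \in \F_q^{m \times n}$ and note that, because $P$ is invertible over $\F_q$, the equation $PA = B$ is equivalent to $A = P^{-1}B$. Hence
$$
\Prob_{A \sim [U_q]^{m \times n}}\{PA = B\} \;=\; \Prob_{A \sim [U_q]^{m \times n}}\{A = P^{-1}B\} \;=\; q^{-mn},
$$
where the last equality uses that $A$ is drawn uniformly from the $q^{mn}$-element set $\F_q^{m \times n}$, and $P^{-1}B$ is a single fixed element of that set. Since $B$ was arbitrary, the distribution of $PA$ assigns mass $q^{-mn}$ to every element of $\F_q^{m \times n}$, which is precisely the distribution $[U_q]^{m \times n}$.

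There is essentially no obstacle to this proof: the only conceptual point is to recognize that one should reason about the full random matrix $A$ as a single uniformly distributed element of the finite set $\F_q^{m \times n}$, rather than tracking the columns of $PA$ separately. (An equivalent column-by-column argument works, using that $A$ has independent uniform columns in $\F_q^m$ and that $v \mapsto Pv$ is a bijection on $\F_q^m$, but the matrix-level bijection argument is the cleanest and avoids any bookkeeping about stochastic independence.)
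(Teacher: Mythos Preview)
Your proof is correct, and in fact it is essentially the computation the paper carries out in its \emph{second} step: the paper also writes $\Prob\{PA = Y\} = \Prob\{A = P^{-1}Y\} = q^{-mn}$ to establish joint independence of the entries of $PA$. The difference is that the paper precedes this with a separate argument that each individual entry $(PA)_{ij}$ is marginally uniform over $\F_q$, by expanding $(PA)_{ij} = \sum_{k: P_{i,k} \neq 0} P_{i,k}A_{k,j}$ and appealing to the facts that scaling by a unit and summing independent copies both preserve $U_q$. Your observation that the single pushforward calculation already pins down the full joint distribution --- and hence simultaneously yields uniform marginals and stochastic independence --- makes that first step redundant, so your version is the cleaner of the two.
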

\begin{proof}
Denote the uniform distribution over $\F_q$ by $U_q$. The following two simple observations will be helpful. First, $cX \sim U_q$ if $X \sim U_q$ and $c \in \F_q^{\times}$. Second, $X + Y \sim U_q$ if $X, Y \sim U_q$ are stochastically independent.
\par First, we show that each $(PA)_{ij} \sim U_q$. We have $(PA)_{ij} = \sum_k P_{i, k}A_{k, j} = \sum_{k: P_{i, k} \neq 0} P_{i, k}A_{k, j}$. Observe that $|\{k: P_{i, k} \neq 0\}| > 0$, since $P$ is invertible. By the first observation above, $P_{i,k}A_{k,j} \sim U_q$ for each $k$ in the summand. Since the terms $\{P_{i,k}A_{k,j}\}$ are functions of stochastically independent random variables, they are themselves stochastically independent, and thus $\sum_{k: P_{i, k} \neq 0} P_{i, k}A_{k, j} \sim U_q$ by the second observation and a simple induction argument.
\par It remains to show that $\{(PA)_{ij}\}$ are stochastically independent. Simply observe that for all $Y \in \F_q^{m \times n}$:
$$
\Prob\{(PA)_{ij} = Y_{ij}, \; \forall i,j\}
= 
\Prob\{PA = Y \}
=
\Prob\{A = P^{-1}Y\}
=
q^{-nm}
=
\prod_{i,j} \Prob\{(PA)_{ij} = Y_{ij} \}
$$
\end{proof}

\par We are now ready to prove Lemma~\ref{lem:sequence}.

\begin{proof}[Proof of Lemma~\ref{lem:sequence}]
\par \textit{Case 1: $m > n$}. We show that there exists such a sequence of matroid operations on $A$ with probability greater than $1 - q^{k - m}$. Partition the random matrix $A$ into blocks as $A = \begin{bmatrix}
L & R
\end{bmatrix}$, where $L$ contains the left-most $k$ columns, and $R$ contains the remaining $n-k$ columns. A simple but key observation is that: because the entries of $A$ are stochastically independent, conditioning on the drawing of some entries of $A$ does not affect the distribution of any of the other entries. So draw the entries of $L$, but leave $R$ as a random matrix. As we will see, this allows us to preserve randomness in the resulting matroid minor.
\par By applying Corollary~\ref{corol:li-bound} to $A^T$, we know that the columns of $L$ are linearly independent with probability greater than $1 - q^{k -m}$. Whenever this occurs, we may apply the following operations to $A$: apply a possible change of basis that maps these first $k$ columns to the standard basis vectors $e_1, \dots, e_k$; then contract by them (and delete them). Observe that contraction (and deletion) of a unit column corresponds to deleting that column as well as the row containing the non-zero entry. Thus by Lemma~\ref{lem:change-of-basis}, the resulting random matrix is of the form $B \sim [U_q]^{(m - k) \times (n - k)}$. 
\par \textit{Case 2: $m \leq n$}. Now we show that there exists such a sequence of matroid operations on $A$ with probability greater than $1 - q^{k - n}$. This will take slightly more care because we (of course) cannot contract a column-dependence matroid by its rows.
\par Partition the random matrix $A$ into blocks as follows:
\begin{align}
A = \begin{bmatrix}
G \sim [U_q]^{k \times n} \\ 
H \sim [U_q]^{(m - k) \times n} \notag
\end{bmatrix}
\end{align}
As in Case 1 above, we will draw some entries of $A$ but not all of them, in order to preserve randomness in the resulting matroid minor. Specifically, draw the entries of $G$, but leave $H$ as a random matrix for now.
\par By applying Corollary~\ref{corol:li-bound} to $G^T$, we know that the rows of $G$ are linearly independent with probability greater than $1 - q^{k -n}$. Because row rank equals column rank for matrices, this would imply the existence of a linearly independent subset of $k$ columns in $G$. Without loss of generality, we may assume that these are the first $k$ columns of $G$, giving us the following picture:
\begin{align}
A = \begin{bmatrix}
W \in \F_q^{k \times k} & X \in \F_q ^{k \times (n - k)} \\ 
Y \sim [U_q]^{(m - k) \times k} & Z \sim [U_q]^{(m - k) \times (n - k)} \notag
\end{bmatrix}
\end{align}
where $W$ and $X$ are drawn, $Y$ and $Z$ are still random, and $W$ is full rank. Now draw the entries in $Y$. By a basic property of linear algebra, the first $k$ columns of $A$ (i.e. those corresponding to the columns of $W$) are linearly independent regardless of the value of $Y$.
\par Therefore, with probability greater than $1 - q^{k-n}$, we may apply the following operations to $A$: apply a possible change of basis that maps these first $k$ columns to the standard basis vectors $e_1, \dots, e_k$; then contract by them (and delete them). By an identical argument to the one used in case 1 above, the resulting random matrix is of the form $B \sim [U_q]^{(m - k) \times (n - k)}$.
\end{proof}

\subsection{Lower bounding the probability of containing a minor, by counting representations over $\F_q$}\label{subsec:n-bigger-minor}
In this section, we provide lower bounds on the probability that a random representable matroid contains a given minor. The main result of this section is the following.

\begin{lemma}\label{lem:lower-bound}
Let $\F_q$ be a finite field, and $M = (E, I)$ be any $\F_q$-representable matroid with $\ell$ loops. Then for any $m \geq r(M)$ and $n \geq |E|$:
$$\Prob_{A \sim [U_q]^{m \times n}} \left\{M \text{ is a minor of } M[A] \right\}
\geq
1 - \left(1 - p_{m, q, M} \right)^{\lfloor \frac{n}{|E|} \rfloor}
$$
\end{lemma}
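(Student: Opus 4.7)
The plan is to combine two ingredients: (i) an independence argument over disjoint column blocks of $A$, reducing the problem to the single-block case $n = |E|$, and (ii) a lower bound on the probability that an $m \times |E|$ uniform random matrix has column matroid isomorphic to $M$.

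For (i), I would partition the $n$ columns of $A$ into $t := \lfloor n/|E| \rfloor$ disjoint blocks $A_1, \ldots, A_t \in \F_q^{m \times |E|}$ (discarding any leftover columns). Since the entries of $A$ are drawn i.i.d.\ from $U_q$, the blocks are stochastically independent, each distributed as $[U_q]^{m \times |E|}$. If any single $A_i$ has $M$ as a minor, then so does $M[A]$ (by deleting the columns outside block $i$). By complementation and independence,
$$\Prob\{M \text{ is a minor of } M[A]\} \;\geq\; 1 - \big(1 - \Prob_{A_1 \sim [U_q]^{m \times |E|}}\{M \text{ is a minor of } M[A_1]\}\big)^{t},$$
so it suffices to prove $\Prob\{M \text{ is a minor of } M[A_1]\} \geq p_{m,q,M}$ in the single-block case.

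For (ii), I would use the sufficient condition $M[A_1] \cong M$, which already implies that $M$ is a minor of $M[A_1]$ (as a restriction, with no deletions or contractions needed). Since $A_1$ is uniform on $\F_q^{m \times |E|}$, this probability equals $N / q^{m|E|}$, where $N$ counts the matrices whose column matroid is isomorphic to $M$. I would lower-bound $N$ by an explicit construction. Fix once and for all a reference representation $\phi_0 : E \to \F_q^m$ of $M$ (which exists by $\F_q$-representability), a basis $B$, and the loop set $L$ of $M$, so that each non-basis non-loop element $e$ has prescribed basis coordinates $\phi_0(e) = \sum_{b \in B} c_{e,b} \phi_0(b)$. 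Now assemble $A_1$ in three steps: (a) choose an $\ell$-subset of column positions for the zero (loop) columns, giving $\binom{|E|}{\ell}$ options; (b) at the first $r(M)$ non-loop positions (designated canonically as the basis positions), place an ordered tuple of $r(M)$ linearly independent vectors $v_1, \ldots, v_{r(M)} \in \F_q^m$, giving $\prod_{i=0}^{r(M)-1}(q^m - q^i)$ options; (c) at each remaining non-basis non-loop position $e$, place $\alpha_e \sum_{b \in B} c_{e,b} v_b$ for any nonzero $\alpha_e \in \F_q^{\times}$, giving $q-1$ options per position. Multiplying these counts and dividing by $q^{m|E|}$ yields exactly $p_{m,q,M}$.

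The main obstacle is verifying that every matrix constructed this way truly has $M[A_1] \cong M$: this follows because the linear map $T$ sending $\phi_0(b) \mapsto v_b$ is an isomorphism on $\text{span}(\phi_0(B))$, hence preserves all linear dependencies among the non-loop columns, and subsequent nonzero column rescaling is also matroid-preserving. A secondary but necessary check is that the constructed matrices are all distinct, so that multiplying the step counts does not overcount: this is straightforward, because the zero columns pin down the loop positions, the basis vectors are read off the canonical basis positions, and each scalar $\alpha_e$ is uniquely recoverable from its column (using $\sum_{b \in B} c_{e,b} v_b \neq 0$, which holds since $\phi_0(e) \neq 0$ for non-loop $e$). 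Combining these verifications with the block-independence bound above completes the proof.
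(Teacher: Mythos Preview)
Your proof is correct and follows essentially the same approach as the paper. The paper factors your step (ii) into a standalone lemma (Lemma~\ref{lem:num-reps}) that lower-bounds the number of $m \times |E|$ representations of $M$ over $\F_q$ by exactly the product $\binom{|E|}{\ell}(q-1)^{|E|-r(M)-\ell}\prod_{i=0}^{r(M)-1}(q^m-q^i)$ using the same three ingredients you list---placement of the $\ell$ loop (zero) columns, choice of an ordered linearly independent tuple for the basis columns, and nonzero scaling of the remaining non-loop columns---and then combines this count with the identical block-independence argument you give in step (i).
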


The main tool we will use to prove Lemma~\ref{lem:lower-bound} is the following lower bound on the number of representations a matroid has over $\F_q$.

\begin{lemma}\label{lem:num-reps}
Let $\F_q$ be any finite field, $M = (E, I)$ be any $\F_q$-representable matroid with $\ell$ loops, and $m \geq r(M)$. There are at least $\binom{|E|}{\ell} \left(q-1\right)^{|E| - r(M) - \ell}\prod_{i=1}^{r(M)} \left(q^{m} - q^{i-1} \right)$ representations of $M$ over $\F_q$ of dimension $m \times |E|$.
\end{lemma}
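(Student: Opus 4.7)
The plan is to exhibit the claimed number of distinct matrices in $\F_q^{m \times |E|}$ whose column-dependence matroid is isomorphic to $M$, by combining three independent degrees of freedom: (a) which $\ell$ positions hold the zero (loop) columns, (b) a full-column-rank matrix $V \in \F_q^{m \times r(M)}$ that embeds a canonical rank-$r(M)$ representation of $M$'s loop-free part into $\F_q^m$, and (c) a rescaling of each non-loop, non-basis column by an arbitrary element of $\F_q^\times$.

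To set up the canonical representation, let $L \subseteq E$ denote the $\ell$ loops of $M$, and fix a basis $B$ of the loop-free restriction $M|_{E \setminus L}$. Since $M$ is $\F_q$-representable, so is $M|_{E \setminus L}$; multiplying on the left by the inverse of the $B$-column submatrix (which is invertible since $B$ is a basis) then yields a normalized representation $C^* \in \F_q^{r(M) \times (|E|-\ell)}$ whose $B$-columns form $I_{r(M)}$. For each subset $S \subseteq [|E|]$ of size $\ell$, fix a bijection $\psi_S : E \to [|E|]$ mapping $L$ onto $S$, and assemble $A^*_S \in \F_q^{r(M) \times |E|}$ by inserting zero columns at positions in $S$ and placing the columns of $C^*$ at the remaining positions in the order dictated by $\psi_S$. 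By construction $M[A^*_S] \cong M$, with loops exactly at $S$.

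Next, for each $S$, range over all $V \in \F_q^{m \times r(M)}$ with linearly independent columns (of which there are $\prod_{i=1}^{r(M)}(q^m - q^{i-1})$, by a standard step-by-step count like the one in the proof of Lemma~\ref{lem:li}) and all diagonal $D \in \F_q^{|E| \times |E|}$ with $D_{ii} = 1$ for $i \in S \cup \psi_S(B)$ and $D_{ii} \in \F_q^\times$ arbitrary on the remaining $|E| - r(M) - \ell$ indices (giving $(q-1)^{|E|-r(M)-\ell}$ choices). Form $V A^*_S D \in \F_q^{m \times |E|}$. Because $V$ has linearly independent columns, left multiplication by $V$ is injective on the column span and therefore preserves all linear-dependence relations among columns, while right multiplication by $D$ merely rescales the nonzero columns by nonzero scalars; hence $M[V A^*_S D] \cong M[A^*_S] \cong M$, so each such $V A^*_S D$ is a representation of $M$.

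The main step remaining, and the one to execute carefully, is verifying that $(S, V, D) \mapsto V A^*_S D$ is injective. Different $S$ produce matrices whose zero-column sets are exactly $S$, so they differ. For fixed $S$, restricting the equality $V A^*_S D = V' A^*_S D'$ to the $r(M)$ columns at positions $\psi_S(B)$ — where $A^*_S$ equals $I_{r(M)}$ and both diagonals are normalized to $1$ — immediately forces $V = V'$; substituting back and comparing any non-loop, non-basis column, which is a nonzero vector of the form $V(A^*_S)_i$, then pins down $D_{ii} = D'_{ii}$ one index at a time. Multiplying the three counts yields the claimed lower bound $\binom{|E|}{\ell}(q-1)^{|E|-r(M)-\ell}\prod_{i=1}^{r(M)}(q^m - q^{i-1})$. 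The principal obstacle is really just this bookkeeping: normalizing the $B$-columns of $A^*_S$ to form $I_{r(M)}$ is what makes the injectivity verification essentially immediate.
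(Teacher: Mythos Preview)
Your proof is correct and follows essentially the same three-factor decomposition as the paper: choice of loop positions, embedding of the basis columns into $\F_q^m$, and nonzero rescaling of the remaining non-loop columns. Your formalism differs only superficially --- you apply a full-column-rank $V \in \F_q^{m \times r(M)}$ to a normalized $r(M)$-row template with the basis columns set to $I_{r(M)}$, whereas the paper applies an invertible $P_{S'} \in \F_q^{m \times m}$ to a fixed $m$-row representation --- and you are more explicit about the injectivity bookkeeping, but the argument and resulting count are identical.
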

\begin{proof}
By the assumptions that $M$ is $\F_q$-representable and $m \geq r(M)$, there exists some representation $R \in \F_q^{m \times |E|}$ of $M$. Fix any basis $S \subseteq E$ of $M$; then the corresponding set of columns $R[S]$ spans the column space of $R$. For any set $S'$ of $|S| = r(M)$ linearly independent vectors in $\F_q^{m}$, consider any invertible linear map $P_{S'} : \F_q^m \to \F_q^m$ that sends the columns $R[S]$ to the columns in $S'$. Since a change of basis clearly does not affect the linear independence of columns, each matrix $P_{S'}R$ is a valid representation of $M$. Further, the representations $P_{S'}R$ are clearly distinct for distinct sets $S'$, regardless of which mappings $P_{S'}$ were chosen. Thus since there are $\prod_{i=1}^{r(M)} \left(q^m - q^{i-1}\right)$ such sets $S'$, there are at least $\prod_{i=1}^{r(M)} \left(q^m - q^{i-1}\right)$ representations $R \in \F_q^{m \times |E|}$ of $M$, none of which send the columns $R[S]$ to the same columns $P_{S'}R[S]$.
\par Next, for each such representation $P_{S'}R$, we can multiply each non-zero column (equivalently, each column corresponding to a non-loop element of $M$) that is not in the basis $S'$ by any element of $\F_q^{\times}$ and still be a representation of $M$. Since there are precisely $|E| - r(M) - \ell$ of these columns, there are at least $(q-1)^{|E| - r(M) - \ell}\prod_{i=1}^{r(M)} \left(q^m - q^{i-1}\right)$ representations $R \in \F_q^{m \times |E|}$ of $M$. 
\par Finally, we can introduce a factor of $\binom{|E|}{\ell}$ to account for the ordering of the columns. If we treat all non-zero columns as one type of column, and all zero columns (loops) as another type, we see there are $\binom{|E|}{\ell}$ distinct ways to arrange the zero and non-zero columns. 
\end{proof}
\par We are now ready to prove Lemma~\ref{lem:lower-bound}.
\begin{proof}[Proof of Lemma~\ref{lem:lower-bound}]
Denote $t = \lfloor \frac{n}{|E|} \rfloor$. Partition the first $t|E|$ columns of $A$ into $t$ blocks of size $|E|$, and denote the resulting submatrices by $A_1, \dots, A_t \in \F_q^{m \times |E|}$. By Lemma~\ref{lem:num-reps}, there are at least $\binom{|E|}{\ell} \left(q-1\right)^{|E| - r(M) - \ell} \prod_{i=1}^{r(M)}(q^m - q^{i-1})$ representations $R \in \F_q^{m \times |E|}$ of $M$, each occuring with probability $q^{-m |E|}$ when drawn from the uniform distribution $[U_q]^{m \times |E|}$. Therefore, for each $i \in \{1, \dots, t\}$, the probability that $M$ is a minor of $M[A_i]$ is bounded below by $p_{m, q, M}$.
\par Now, since the events that $M$ is a minor of $M[A_i]$ are stochastically independent, $M$ is a minor of at least one of the $M[A_i]$ with probability at least $1 - \left(1 - p_{m, q, M})\right)^t$. This completes the proof since $M$ is a minor of $M[A]$ whenever it is a minor of one of the $M[A_i]$.
\end{proof}

\subsection{Proof of Theorem~\ref{thm:subsec-n-bigger-gen-new}}\label{subsec:n-bigger-proofs}
Now that we have tools to analyze minors of random representable matroids, we are finally ready to prove Theorem~\ref{thm:subsec-n-bigger-gen-new}. The proof formalizes the intuition given earlier about how to bound below the probability that $M$ is a minor of $M[A]$.

\begin{proof}[Proof of Theorem~\ref{thm:subsec-n-bigger-gen-new}]
It suffices to show that the desired inequality holds for each positive integer $k \leq \min(n - |E|, m- r(M))$. So fix any such $k$. Applying Lemma~\ref{lem:sequence} to $A$, we have that with probability greater than $1 -q^{-(n-k)}$, there exists a sequence $S$ of contractions and deletions on $M[A]$ that result in a linear matroid $M[B]$, where $B \sim [U_q]^{(m-k) \times (n-k)}$. Therefore, by conditioning on whether such a sequence $S$ exists:
\begin{align}
\; &\Prob_{A \sim [U_q]^{m \times n}}\{M \text{ is minor of } M[A]\}
\\ >& \left(1 -q^{-(n-k)}\right) \cdot \Prob_{A \sim [U_q]^{m \times n}} \{M \text{ is minor of } M[A] \; \Big| \; \exists \text{ sequence } S\}
\\ \geq & \left(1 -q^{-(n-k)}\right)  \cdot \Prob_{B \sim [U_q]^{(m-k) \times (n-k)}}\{M \text{ is minor of } M[B]\} \label{eq:proof-main-minor-of-minor}
\\ \geq & \left(1 - q^{-(n-k)}\right) \cdot
\left(1 - \left(1 -
p_{m-k, q, M}
\right)^{\lfloor \frac{n-k}{|E|} \rfloor}
\right)
\label{eq:proof-main-multinomial}
\end{align}
where the inequality in~\eqref{eq:proof-main-minor-of-minor} is due to the fact that a minor of a minor of a matroid is also a minor of that matroid, and the inequality in~\eqref{eq:proof-main-multinomial} follows by an application of Lemma~\ref{lem:lower-bound}.
\end{proof}

\section{Implications about Inclusion of Large Random Representable Matroids in Proper, Minor-Closed Classes of $\F_q$-Representable Matroids}\label{sec:implications}
Because the study of matroid minors is closely entwined with the study of matroid characterizations, we can obtain information about the matroid class of a $[U_q]^{m(n) \times n}$ random matrix by considering forbidden-minor characterization theorems.
\par Combining our results in Theorems~\ref{thm:free} and~\ref{thm:n-bigger-asymp} directly gives that for every finite field $\F_q$ and every fixed proper, minor-closed class $\M$ of $\F_q$-representable matroids, $\M$ is a vanishingly small subset of linear matroids with respect to the distribution $[U_q]^{m(n) \times n}$ (under mild assumptions on $m : \N \to \N$). This result is formally stated as follows.

\begin{thm}\label{thm:minor-closed}\footnote{We present Theorem~\ref{thm:minor-closed} in the setting $n - m(n) \to \infty$, which we know gives clean bounds by Theorems~\ref{thm:free} and~\ref{thm:n-bigger-asymp}. We could certainly obtain similar results for other settings of $m : \N \to \N$ using Theorem~\ref{thm:n-bigger-gen-asymp}, but the bounds and results would not be as elegant.}
Let $\F_q$ be a finite field. Consider any proper, minor-closed class $\M$ of $\F_q$-representable matroids, and let $k_{\M}$ be the minimum rank of any $\F_q$-representable excluded minor of $\M$. If $m: \mathbb{N} \rightarrow \mathbb{N}$ satisfies (i) $m(n) \geq k_{\M}$ for all sufficiently large $n$; and (ii) $n - m(n) \to \infty$, then:
$$
\limn \Prob_{A_n \sim [U_q]^{m(n) \times n}} \{ M[A_n] \not\in \M \} = 1
$$
\end{thm}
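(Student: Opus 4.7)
The plan is to reduce the theorem to the containment results already proved, by exhibiting a single $\F_q$-representable witness $N \notin \M$ and showing that $N$ appears a.a.s.\ as a minor of $M[A_n]$. Since $\M$ is minor-closed, any matroid that contains $N$ as a minor is itself outside $\M$, so for any such $N$ we have
$$
\Prob_{A_n \sim [U_q]^{m(n) \times n}}\{M[A_n] \notin \M\} \;\geq\; \Prob_{A_n \sim [U_q]^{m(n) \times n}}\{N \text{ is a minor of } M[A_n]\}.
$$
Hence it suffices to produce one $\F_q$-representable excluded minor $N$ of $\M$ to which one of the earlier a.a.s.\ containment theorems applies.

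First I would verify that the constant $k_{\M}$ is well-defined, i.e.\ that an $\F_q$-representable excluded minor of $\M$ actually exists. Since $\M$ is a proper subclass of the $\F_q$-representable matroids, we may pick any $\F_q$-representable $M_0 \notin \M$. We then iteratively replace $M_0$ by a proper minor that still lies outside $\M$; this process must terminate because $|E(M_0)|$ is finite, and the terminal matroid is by construction an excluded minor of $\M$. It remains $\F_q$-representable because minors of $\F_q$-representable matroids are $\F_q$-representable, so the set of $\F_q$-representable excluded minors is nonempty and $k_{\M} = \min\{r(N) : N \text{ an }\F_q\text{-representable excluded minor of }\M\}$ is attained. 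Fix $N$ to be an $\F_q$-representable excluded minor of $\M$ with $r(N) = k_{\M}$.

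Now I would split into two cases based on whether $N$ is free. In both cases hypothesis (i) gives $m(n) \geq k_{\M} = r(N)$ for all sufficiently large $n$. If $N$ is free, Theorem~\ref{thm:free} immediately yields that $N$ is a.a.s.\ a minor of $M[A_n]$. If $N$ is non-free, we additionally invoke hypothesis (ii) that $n - m(n) \to \infty$ and apply Theorem~\ref{thm:n-bigger-asymp}, again concluding that $N$ is a.a.s.\ a minor of $M[A_n]$. Combining either conclusion with the displayed inequality shows that $\Prob\{M[A_n] \notin \M\} \to 1$, as desired.

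Essentially all the technical difficulty was absorbed into the earlier sections, and there is no serious obstacle remaining; the only non-mechanical step is the short existence argument for an $\F_q$-representable excluded minor, and after that the proof is a case split plus a citation. The slight care needed is making sure to distinguish the free and non-free subcases, since the two underlying theorems are proved by quite different methods (Theorem~\ref{thm:free} by direct full-rank arguments on a submatrix versus Theorem~\ref{thm:n-bigger-asymp} by the reduction-plus-counting machinery of Section~\ref{sec:n-bigger}), but both have hypotheses that match exactly what (i) and (ii) provide.
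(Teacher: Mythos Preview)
Your proposal is correct and follows essentially the same route as the paper: pick an $\F_q$-representable excluded minor $N$ of rank $k_{\M}$, bound $\Prob\{M[A_n]\notin\M\}$ below by $\Prob\{N\text{ is a minor of }M[A_n]\}$, and invoke Theorems~\ref{thm:free} and~\ref{thm:n-bigger-asymp}. The paper's proof is terser (it cites both theorems without explicitly splitting into the free/non-free cases and does not spell out the existence argument for an $\F_q$-representable excluded minor), but the substance is identical.
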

\begin{proof}
Let $M$ be an $\F_q$-representable excluded minor of $\M$ with rank $k_{\M}$. Then the probability that $M[A_n] \not\in \M$ is bounded below by the probability that $M$ is a minor of $M[A_n]$. By Theorems~\ref{thm:free} and~\ref{thm:n-bigger-asymp}, the latter probability tends to $1$ as $n \to \infty$, since by assumption $m(n) \geq k_{\M} = r(M)$ for all sufficiently large $n$.
\end{proof}


\par The power of Theorem~\ref{thm:minor-closed} is that it can easily be combined with any known forbidden-minor characterization. For example, we can show that graphic matroids are a vanishing subset of linear matroids, with respect to the uniform random distribution $[U_q]^{m(n) \times n}$, and under mild constraints on $m : \N \to \N$.
\par This is an interesting result in itself. It is known that graphic matroids are a subset of linear matroids, since every graph can be represented as a matrix (its oriented incidence matrix), but not every matrix can be represented as a graph (Tutte's Theorem gives necessary and sufficient conditions \citep{Tutte59}). However, it is not obvious how frequently a random linear matroid is graphic. This is given by the following corollary.

\begin{corol}\label{corol:graph}
Let $\F_q$ be any finite field, and let $m: \mathbb{N} \rightarrow \mathbb{N}$ satisfy $n - m(n) \to \infty$. Then
$$\lim_{n \to \infty} \mathbb{P}_{A_n \sim [U_q]^{m(n) \times n}}\{ M[A_n] \text{ is not a graphic matroid}\} = 1$$
if for all sufficiently large $n$: $m(n) \geq 2$ if $q > 2$ or $m(n) \geq 3$ if $q = 2$.
\end{corol}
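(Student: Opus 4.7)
The plan is to reduce the statement to a direct application of Theorem~\ref{thm:minor-closed}. Let $\M$ denote the class of graphic matroids viewed as a subclass of $\F_q$-representable matroids. This class is proper (e.g.\ $U_{2,4}$ is not graphic for $q \geq 3$, and $F_7$ is not graphic for $q = 2$) and minor-closed (contraction and deletion in a graphic matroid correspond to the usual graph operations of edge contraction and deletion). Therefore, to invoke Theorem~\ref{thm:minor-closed}, the only task is to identify the quantity $k_{\M}$, namely the minimum rank of any $\F_q$-representable excluded minor of $\M$, and check it matches the thresholds $2$ or $3$ given in the statement.

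Here I would appeal to Tutte's forbidden-minor characterization of graphic matroids: a matroid is graphic if and only if it contains none of $U_{2,4}$, $F_7$, $F_7^*$, $M^*(K_5)$, $M^*(K_{3,3})$ as a minor. I then compute $k_{\M}$ by cases. For $q \geq 3$, the matroid $U_{2,4}$ is $\F_q$-representable (its four points can be realized by four distinct one-dimensional subspaces of $\F_q^2$, which requires at least three nonzero scalars, i.e.\ $q \geq 3$) and has rank $2$, so $k_{\M} = 2$. For $q = 2$, $U_{2,4}$ is not $\F_2$-representable, but $F_7$ (the Fano matroid) is, and it has rank $3$; the remaining excluded minors $F_7^*$, $M^*(K_5)$, $M^*(K_{3,3})$ have ranks $4$, $6$, $4$ respectively, so $k_{\M} = 3$.

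Having pinned down $k_{\M}$, the proof finishes by applying Theorem~\ref{thm:minor-closed} to $\M$. Hypothesis (ii) of that theorem, namely $n - m(n) \to \infty$, is given by assumption; hypothesis (i), namely $m(n) \geq k_{\M}$ for all sufficiently large $n$, is exactly the case split $m(n) \geq 2$ when $q > 2$ and $m(n) \geq 3$ when $q = 2$ that appears in the statement of the corollary. The conclusion of Theorem~\ref{thm:minor-closed} then yields $\lim_n \Prob\{M[A_n] \notin \M\} = 1$, which is precisely the claim that $M[A_n]$ is a.a.s.\ not graphic.

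The main (minor) obstacle is bookkeeping around the forbidden-minor list: one must be careful to restrict to $\F_q$-representable excluded minors (since non-representable excluded minors can never arise in $M[A_n]$ anyway), and to note that for $q = 2$ the rank threshold jumps from $2$ to $3$ precisely because $U_{2,4}$ drops out of the $\F_2$-representable list. No probabilistic work is needed beyond what has already been done in Theorems~\ref{thm:free} and~\ref{thm:n-bigger-asymp}, which power Theorem~\ref{thm:minor-closed}.
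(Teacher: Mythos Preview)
Your proposal is correct and follows essentially the same approach as the paper: reduce to Theorem~\ref{thm:minor-closed}, invoke Tutte's excluded-minor characterization of graphic matroids, and compute $k_{\M}$ by the case split on whether $U_{2,4}$ is $\F_q$-representable. You supply slightly more detail than the paper (explicitly listing the ranks of $F_7^*$, $M^*(K_5)$, $M^*(K_{3,3})$ and noting that $\M$ is proper and minor-closed), but the argument is the same.
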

\begin{proof}
By Theorem~\ref{thm:minor-closed}, it suffices to compute $k_{\M}$ the minimum rank of $\F_q$-representable excluded minors of graphic matroids. To do this, recall Tutte's characterization of graphic matroids, which states that a matroid is graphic if and only if it does not contain as a minor any of $U_{2,4}$, $F_{7}$, $F_{7}^*$, $M^*(K_5)$, and $M^*(K_{3, 3})$ \citep{Tutte59}, where $M(G)$ denotes the matroid corresponding to a graph $G$. The only matroid of these that has rank $2$ is $U_{2,4}$, which is $\F_q$-representable only for $q > 2$~\citep{Tutte2}. Thus the rank $k_{\M}$ of the smallest $\F_q$-representable excluded minor is either rank$(U_{2,4}) = 2$ if $q > 2$, or rank$(F_7) = 3$ if $q = 2$.
\end{proof}

\section*{Acknowledgements}
We are indebted to the two anonymous reviewers for their many insightful suggestions that have greatly strengthened the paper. We thank Emmanuel Abbe for his helpful feedback, as well as suggesting the problem of how often large random binary matrices can be graphic matroids (answered in Corollary~\ref{corol:graph}), during his seminar on coding theory and random graphs at Princeton University. We thank Paul Seymour and Ramon van Handel for helpful conversations about graph theory and random matrix theory, respectively.
\par JA was supported by NSF Graduate Research Fellowship 1122374.

\newpage

{\footnotesize
\bibliography{Inclusion_of_Forbidden_Matroid_Minors_in_Random_Binary_Matricesbib}{}}
\bibliographystyle{apalike}

\end{document}